\documentclass[12pt,oneside]{amsart}
\usepackage{amsfonts}
\usepackage{amsmath}
\usepackage{amssymb}
\usepackage{mathrsfs}
\usepackage{graphicx}
\usepackage{subfigure}
\usepackage{hyperref}
\usepackage{microtype}
\usepackage{enumerate}
\usepackage{epsfig}
\usepackage{cancel}
\usepackage{color}
\usepackage{cite}
\usepackage{makecell}
\usepackage{multirow}
\usepackage{footmisc}
\usepackage{wrapfig}
\usepackage{array}
\usepackage{titletoc}
\usepackage{bm}
\usepackage{times}
\hypersetup{
colorlinks=true,
linkcolor=blue,
filecolor=blue,
urlcolor=blue,
citecolor=blue,}

\usepackage{geometry}
\usepackage{multirow}
\newtheorem{theorem}{Theorem}[section]
\newtheorem{lemma}[theorem]{Lemma}

\newtheorem{proposition}[theorem]{Proposition}

\numberwithin{equation}{section}

\theoremstyle{remark}
\newtheorem{remark}{Remark}[section]

\allowdisplaybreaks[3]

\def\De{\Delta}

\def\na{\nabla}
\def\pat{\partial_t}

\def\lan{\langle}
\def\ran{\rangle}

\newcommand{\la}{\lambda}
\newcommand{\al}{\alpha}

\newcommand{\vth}{\vartheta}

\newcommand{\R}{\mathbb{R}}

\newcommand{\Z}{\mathbb{Z}}
\newcommand{\T}{\mathbb{T}}

\newcommand{\om}{\omega}

\newcommand{\n}[1]{\Vert #1\Vert }
\newcommand{\bn}[1]{\big \Vert #1 \big \Vert }
\newcommand{\bbn}[1]{\Big\Vert #1 \Big \Vert }
\newcommand{\lr}[1]{\left\{ #1 \right\} }

\newcommand{\lrs}[1]{\left( #1 \right) }

\newcommand{\pa}{\partial}

\begin{document}
\title[Quantitative Stability of the 2D Monotone Shear Flow]{Quantitative stability of the 2D Monotone shear flow for Boussinesq equation in a finite channel}

\author[]{Qionglei Chen}
\address[]{Institute of Applied Physics and Computational Mathematics, 100088 Beijing, China}
\email{chen\_qionglei@iapcm.ac.cn}

\author[]{Zhen Li}
\address[]{School of Mathematical Sciences, Key Laboratory of Mathematics and Complex Systems, Ministry of Education, Beijing Normal University, 100875 Beijing, China}
\email{lizhen@bnu.edu.cn}

\keywords{Monotone shear flow, Transition threshold, Finite channel, Non-slip boundary conditions}
\begin{abstract}
Neither natural nor laboratory laminar flows are perfectly steady. Instead, they are frequently highly unsteady, as illustrated by experimental studies on B\'{e}nard convection. In the paper, we investigate the transition threshold of the Boussinesq equations around a time-dependent monotone shear flow $(U(t,y),0)$ with a constant background temperature $a\in\R$. The analysis is performed in the finite channel $\T\times[0,1]$ with non-slip boundary condition. By means of the sharp resolvent estimates and space-time estimates, we establish that the Boussinesq system admits a globally stable solution around  the monotone shear flow, provided that the initial perturbation satisfies $\|u^{\mathrm{in}}\|_{H^2}\leq c\nu^{\frac12}, \|\lan D_x\ran \theta^{\mathrm{in}}\|_{L^2} \leq c\nu^{\frac56}$. Moreover, we derive the enhanced dissipation estimate of the vorticity and inviscid damping estimate of the velocity.
\end{abstract}

\maketitle
\section{Introduction}\label{sec:Introduction}
We study the 2D incompressible Boussinesq equations in a finite channel $\T\times [0,1]$:
\begin{equation}\label{Bou}
	\left\{
    \begin{aligned}
    &\pat v-\nu\De v + (v\cdot\na)v + \na P =\vartheta e_{2}, \\
    &\pat \vartheta-\mu \De \vth+(v\cdot\na)\vth=0,\\
    &\na\cdot v = 0,\\
    & v(0,x,y) = v^{\mathrm{in}}(x,y),
    \end{aligned}
	\right.
	\end{equation}
where $v(t,x,y)$ is the velocity field, $\vth(t,x,y)$ is the temperature, $P$ is the pressure and $e_{2}=(0,1)$ is the unit vector in the vertical direction. Moreover, $\nu$ and $\mu$ are the viscosity coefficient and the thermal diffusivity, respectively. We focus on the case $\nu=\mu$ for simplicity throughout this paper.

The Boussinesq system serves as a simplified model for the fluid dynamics of the atmospheric and oceanographic flows and plays an important role in studying Rayleigh-B\'{e}nard convection for laminar and turbulent flows \cite{Gill,Pedlosky,Drazin-Reid}. From a mathematical view, the 2D Boussinesq equations preserves some key features of the 3D Euer and Navier-Stokes equations. Thanks to the notable significance in mathematics and physics of this model, numerous scholars have directed their focus toward its theoretical exploration.

The transition threshold problem has been the focus of extensive research efforts in fluid dynamics. The Couette flow, among the simplest laminar flows, has attracted the attention of many mathematicians. On the domain $\T\times \R$, Deng-Wu-Zhang \cite{Deng-Wu-Zhang} established the asymptotic stability, with the initial data $(\om^{\mathrm{in}}, \theta^{\mathrm{in}})$ satisfying
$$\|\om^{\mathrm{in}}\|_{H^s}\leq c\nu^{\frac12}, \quad \|\theta^{\mathrm{in}}\|_{H^s}\leq c\nu, \quad \||D_x|^{\frac13} \theta^{\mathrm{in}}\|_{H^s}\leq c\nu^{\frac56},\quad s>1.$$
Zhang-Zi \cite{Zhang-Zi} improved the nonlinear stability results under the following initial data constraints:
$$\|\om^{\mathrm{in}}\|_{H^s}\leq c\nu^{\frac13}, \quad \|\theta^{\mathrm{in}}\|_{H^s}\leq c\nu^{\frac56}, \quad \||D_x|^{\frac13} \theta^{\mathrm{in}}\|_{H^s}\leq c\nu^{\frac23}, \quad s>7.$$
Subsequently, Niu-Zhao \cite{Niu-Zhao} refined the bound for the perturbed temperature from $\nu^{\frac56}$ to $\nu^{\frac23}$ and reduced the required regularity index from $s>7$ to $s>5$. For the finite channel $\T\times [-1,1]$ with the non-slip boundary condition, Masmoudi-Zhai-Zhao \cite{Masmoudi-Zhai-Zhao} showed the transition threshold is bounded above by $(\nu^{\frac12}, \nu^{\frac{11}{12}})$. For the infinite channel $\R\times [-1,1]$ under the same boundary condition, Liang-Wu-Zhai \cite{Liang-Wu-Zhai} determined the upper bound of the threshold as $(\nu^{\frac12}, \nu^{\frac{5}{6}})$. In the whole space $\R\times \R$, Wang-Wang \cite{Wang-Wang} derived stability results with the threshold index bounded above by $(\nu^{\frac34}, \nu^{\frac{5}{4}})$. This threshold was later improved to $(\nu^{\frac13+}, \nu^{\frac23+})$ by Chen-Wang-Yang \cite{Chen-Wang-Yang}.

For additional stability results of various shear flows, the reader is referred to \cite{Bedrossian-Bianchini-Coti-Dolce,Coti-Del,Cui-Wang-Wang,Liang-Li-Zhai,Masmoudi-Said-Zhao,Ren-Wei,Zhai-Zhao,Zillinger,Zillinger1} and references therein.
For the study on the asymptotic stability of the Navier-Stokes equations, see \cite{Bedrossian-Germain-Masmoudi-1,Bedrossian-Germain-Masmoudi-2,Bedrossian-Germain-Masmoudi-3,Bedrossian-Germain-Masmoudi-4,Chen-Li-Wei-Zhang ,Chen-Wei-Zhang1, Chen-Wei-Zhang2, cl-jde2024,cl-non2025,Li-Shen-Zhang,Ionescu-Jia2,Ionescu-Jia1,Li-Wei-Zhang1, Masmoudi-Zhao,Wei-Zhang-2,Wei-Zhang-Zhao,Wei-Zhang-Zhao1,Jia1,Chen-Li-Miao} and references therein.

Both in nature and laboratory, laminar flows are rarely perfectly steady. In fact, they are often highly unsteady. A typical example is B\'{e}nard convection observed in laboratory. Consequently, investigating time-dependent shear flows holds considerable significance.

In this paper, we consider the time-dependent monotone shear flow $(U(t,y),0)$.
It is clear that for any $a,b\in\R$,
$$v_{U}=(U(t,y),0),\quad \vth_{a}=a,\quad P_{a,b}=ay+b,$$
is a solution of \eqref{Bou}, where the shear flows are governed by the heat equation
\begin{equation}\label{equ: U}
\left\{
\begin{aligned}
&\pa_{t}U=\nu\pa_{y}^{2}U,\\
&U(0,y)=U^{\mathrm{in}}(y),\quad U(t,0)=U^{\mathrm{in}}(0),\quad U(t,1)=U^{\mathrm{in}}(1).
\end{aligned}
\right.
\end{equation}
We study concave monotone shear flows with the initial data satisfying
\begin{equation}\label{equ:condition,shear,flow,M}
\left\{
\begin{aligned}
 &U^{\mathrm{in}}\in H^4(0,1),\quad \quad \pa_{y}U^{\mathrm{in}}\geq c_0>0,\\
 &\text{$\pa_{y}^{2}U^{\mathrm{in}}\geq 0$, or $\pa_{y}^{2}U^{\mathrm{in}}\leq 0$},
\quad \pa_{y}^{2}U^{\mathrm{in}}(0)=\pa_{y}^{2}U^{\mathrm{in}}(1)=0.
\end{aligned}
\right.
\end{equation}

Let $(u, \theta)=(v-v_{U}, \vth-\vth_{a})$ be the perturbation of the velocity and temperature. Under the non-slip boundary condition, it satisfies
\begin{equation}\label{pertu}
\left\{
\begin{aligned}
&\pat u-\nu \De u+u\cdot\na u+(\pa_yU u^{(2)},0)+U(t,y)\pa_xu+\na p=\theta e_{2},\\
&\pat \theta-\nu \De \theta+u\cdot\na \theta+U(t,y)\pa_x\theta=0,\\
&\na\cdot u=0,\\
&u(t,x,0)=u(t,x,1)=0,\quad \theta(t,x,0)=\theta(t,x,1)=0,\\
&u(0,x,y)=u^{\mathrm{in}}(x,y),\quad \theta(0,x,y)=\theta^{\mathrm{in}}(x,y).
\end{aligned}
\right.
\end{equation}
Let $\om=\pa_y u^{(1)}-\pa_x u^{(2)}$ be the vorticity and $\psi$ be the stream function with the velocity $u=(\pa_{y}\psi,-\pa_x \psi)$. The vorticity and temperature system can be written as
\begin{equation}\label{equ: omli11}
\left\{
\begin{aligned}
&\pat\om-\nu\De \om+u\cdot\na \om+U(t,y)\pa_x\om-\pa^2_{y}U(t,y)\psi=\pa_x \theta,\\
&\pat \theta-\nu \De \theta+u\cdot\na \theta+U(t,y)\pa_x\theta=0,\\
&\De\psi=\om,\\
&\psi(t,x,0)=\psi(t,x, 1)=\pa_y\psi(t,x,0)=\pa_y \psi(t,x,1),\quad \theta(t,x,0)=\theta(t,x,1)=0,\\
&\om(0,x,y)=\om^{\mathrm{in}}(x,y).
\end{aligned}
\right.
\end{equation}

Now, we state the main result as follows.

\begin{theorem}\label{Th: tran thre}
Let $(u,\theta)$ be the solution to \eqref{pertu}. There exist positive constants $\nu_0$, $\epsilon_{0}$, $c$, such that for $0<\nu\leq \nu_{0}$, if the initial perturbation satisfies
$$\|u^{\mathrm{in}}\|_{H^2}\leq c\nu^{\frac12}, \qquad \|\lan D_x\ran \theta^{\mathrm{in}}\|_{L^2}\leq c\nu^{\frac56},$$
then the solution $(u,\theta)$ satisfies the global stability estimates
\begin{align*}
\sum_{k\in\Z} E_k \leq C\nu^{\frac12}, \qquad \sum_{k\in\Z} G_k \leq C\nu^{\frac56},
\end{align*}
where the stability norms are given by
\begin{equation*}
\begin{aligned}
E_k=&\left\{
\begin{aligned}
&\|\om_0\|_{L^\infty L^2}, \quad k=0,\\
&|k|\| e^{\epsilon_0 \nu^{\frac13}t} u_k\|_{L^2L^2}+\|e^{\epsilon_0 \nu^{\frac13}t} u_k\|_{L^\infty L^\infty}\\
&+\|e^{\epsilon_0 \nu^{\frac13}t} \sqrt{1-(2y-1)^2} \om_k\|_{L^\infty L^2}+\nu^{\frac14}|k|^{\frac12}\|e^{\epsilon_0 \nu^{\frac13}t} \om_k\|_{L^2L^2}, \quad k\neq 0,
\end{aligned}
\right.\\
G_k=&\left\{
\begin{aligned}
&\|\theta_0\|_{L^\infty L^2}, \quad k=0,\\
&|k|^{\frac13}\|e^{\epsilon_0 \nu^{\frac13}t} \theta_k\|_{L^\infty L^2}+\nu^{\frac16}|k|^{\frac23}\|e^{\epsilon_0 \nu^{\frac13}t} \theta_k\|_{L^2L^2}, \quad k\neq 0,
\end{aligned}
\right.
\end{aligned}
\end{equation*}
with $f_k(t,y)=:\int_{\T} f(t,x,y) e^{-ikx}dx$ and $\|\cdot\|_{L^pL^q}=\|\cdot\|_{L^p_tL^q_y}$.
\end{theorem}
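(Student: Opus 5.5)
The proof will be a bootstrap argument in Fourier modes in $x$. For each $k\neq0$ the linearized vorticity and temperature equations read
\begin{align*}
&\pat\om_k-\nu(\pa_y^2-k^2)\om_k+ikU(t,y)\om_k-\pa_y^2U(t,y)\psi_k=ik\theta_k+f_k,\\
&\pat\theta_k-\nu(\pa_y^2-k^2)\theta_k+ikU(t,y)\theta_k=g_k,
\end{align*}
with $f_k=-(u\cdot\na\om)_k$, $g_k=-(u\cdot\na\theta)_k$, the non-slip boundary conditions on $\psi_k$, and Dirichlet conditions on $\theta_k$.

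\textbf{Step 1: linear space-time estimates.} I first prove linear space-time estimates for these equations, obtained from resolvent estimates for the operators $-\nu(\pa_y^2-k^2)+ikU-ik\pa_y^2U\Delta_k^{-1}$ (non-slip) and $-\nu(\pa_y^2-k^2)+ikU$ (Dirichlet). Since $U$ depends on time, the resolvent estimates are applied with $U$ frozen, and the time dependence is treated perturbatively. This is justified because $\|\pat U\|\lesssim\nu$, so over the relevant time scale $\nu^{-1/3}$ the variation of $U$ is small. Concavity together with $\pa_y^2U^{\mathrm{in}}=0$ at the boundary is preserved by the heat flow, so $\pa_y U\ge c_0/2$ persists. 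It also keeps the sign of $\pa_y^2U$, which controls the nonlocal term $\pa_y^2U\psi_k$ through a weighted energy (multiplying by $\om_k/\pa_y^2U$ type weights, or using the Rayleigh-type monotone structure).

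\textbf{Step 2: form of the linear bounds.} The resulting bounds take the form
\begin{align*}
E_k\lesssim{}&\|\om_k^{\mathrm{in}}\|+\nu^{-\frac12}\|e^{\epsilon_0\nu^{1/3}t}(f^{(1)}_k,f^{(2)}_k)\|_{L^2L^2}+|k|^{\frac23}\nu^{-\frac13}\|e^{\epsilon_0\nu^{1/3}t}\theta_k\|_{L^2L^2},\\
G_k\lesssim{}&|k|^{\frac13}\|\theta_k^{\mathrm{in}}\|+\nu^{-\frac12}|k|^{\frac13}\|e^{\epsilon_0\nu^{1/3}t} g_k\|_{L^2L^2}.
\end{align*}
Here the nonlinearity is written in divergence form, $f_k=ikf^{(1)}_k+\pa_yf^{(2)}_k$. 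The coupling term $ik\theta_k$ is controlled by the enhanced dissipation rate $\nu^{1/3}|k|^{2/3}$ via the resolvent bound $\|(\cdot)^{-1}\|\lesssim\nu^{-1/3}|k|^{-2/3}$. The inviscid damping part, $|k|\|u_k\|_{L^2L^2}$ and $\|u_k\|_{L^\infty L^\infty}$, comes from the $H^{-1}$-type resolvent estimate $|k|\|u_k\|_{L^2}\lesssim\|F\|_{H^{-1}}$ available for monotone flows. The weight $\sqrt{1-(2y-1)^2}$ handles the boundary layer of width $\nu^{1/3}|k|^{-1/3}$.

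\textbf{Step 3: zero modes.} The zero modes $\om_0,\theta_0$ are controlled by plain energy estimates, since $\pa_y^2U\psi_0$ and the $\theta$-forcing vanish at $k=0$ (the term $\pa_x\theta$ has no zero mode).

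\textbf{Step 4: bootstrap.} Assume $\sum E_k\le 2C\nu^{1/2}$ and $\sum G_k\le 2C\nu^{5/6}$ on $[0,T]$. I bound the nonlinear terms $\sum_{l}u_l\om_{k-l}$ and $u_l\theta_{k-l}$ by splitting the interactions into $(0,k)$, $(k,0)$ and $(l,k-l)$ with both frequencies nonzero. In each product I put the $L^\infty L^\infty$ or $L^2L^\infty$ norm of $u$ on one factor and the $L^2L^2$ dissipative norm on the other, using the $u_k$ bounds from $E_k$ and the $\nu^{1/4}|k|^{1/2}$ gain. This gives
$$\sum_k \nu^{-\frac12}\|e^{\epsilon_0\nu^{1/3}t}f_k\|\lesssim\nu^{-1}\Big(\sum E\Big)^2\lesssim\nu^{\frac12}\cdot C^2\nu^{\frac12}\nu^{-\frac12}.$$
The exponents match so that the quadratic terms are $\lesssim C^2\nu^{1/2}\cdot\nu^{0}c$-small. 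For temperature the estimate is $\nu^{-1/2}\nu^{1/2}\nu^{5/6}$ times the appropriate $\nu^{-1/2}$ loss, absorbed because $\nu^{1/2}\cdot\nu^{-1/2}$ is of order one with small constant. The coupling term gives
$$\nu^{-\frac13}|k|^{\frac23}\|\theta_k\|_{L^2L^2}\lesssim\nu^{-\frac12}G_k\lesssim\nu^{\frac13},$$
which is consistent with $\nu^{1/2}$ only with the precise weighting. This is exactly why the thresholds are $(\nu^{1/2},\nu^{5/6})$, and I would check this exponent bookkeeping carefully.

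\textbf{Main obstacle.} The hardest step is the linear resolvent and space-time estimate for the non-slip vorticity equation with time-dependent $U$ and the nonlocal term $\pa_y^2U\psi$. Non-slip conditions prevent a direct energy method, so I will decompose $\om_k$ into an inhomogeneous Navier-slip part and a homogeneous boundary-layer corrector fixed by the two constraints $\int\om_k e^{\pm|k|y}dy=0$. I would first try to derive the estimates for frozen $t$ via the resolvent and then transfer them to the evolution by a time-localized (Duhamel plus commutator $\nu\pa_y^2U$) argument.
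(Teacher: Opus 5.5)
Your linear plan follows the paper's route: frozen-time resolvent estimates on windows of length $\nu^{-1/3}$, and a Navier-slip part plus boundary-layer correctors fixed by the two $\sinh$-moments. (Drop the $\om_k/\pa_y^2U$ weight, since $\pa_y^2U$ vanishes at $y=0,1$.) The genuine gap is the exponent bookkeeping that should produce $(\nu^{1/2},\nu^{5/6})$. As written, the bootstrap does not close, which you half-notice yourself.

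\emph{Coupling term.} Your bound $E_k\lesssim\nu^{-1/3}|k|^{2/3}\|e^{\epsilon_0\nu^{1/3}t}\theta_k\|_{L^2L^2}\le\nu^{-1/2}G_k$ only yields $E_k\lesssim\nu^{1/3}$. What you are missing is that the space-time estimate is quadratic. An $L^2_tL^2_y$ forcing $f_3$ enters as $E_k^2\lesssim\min\{\nu^{-1/3}|k|^{-2/3},\nu^{-1}|k|^{-2}\}\|e^{\epsilon_0\nu^{1/3}t}f_3\|_{L^2L^2}^2$. This comes from Plancherel applied to resolvent bounds such as $\nu^{1/6}|k|^{4/3}\|u_{Na}\|_{L^2}+\nu^{1/3}|k|^{2/3}\|w_{Na}\|_{L^2}\lesssim\|F\|_{L^2}$, plus an energy identity pairing $f_3$ with $\om$ for the $L^\infty L^2$ part. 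So the linear cost of an $L^2$ forcing is $\nu^{-1/6}|k|^{-1/3}$, the square root of the resolvent norm, not the resolvent norm itself. With $f_3=ik\theta_k$ this gives $E_k\lesssim\nu^{-1/6}|k|^{2/3}\|e^{\epsilon_0\nu^{1/3}t}\theta_k\|_{L^2L^2}\le\nu^{-1/3}G_k$, using exactly the component $\nu^{1/6}|k|^{2/3}\|\theta_k\|_{L^2L^2}$ of $G_k$. Since $\nu^{-1/3}\nu^{5/6}=\nu^{1/2}$, this is where the exponent $5/6$ comes from.

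\emph{Quadratic term.} To close at $\nu^{1/2}$ you need $E_k\lesssim\nu^{-1/2}\sum_lE_lE_{k-l}$, i.e.\ $\|e^{\epsilon_0\nu^{1/3}t}f^{(j)}_k\|_{L^2L^2}\lesssim\sum_lE_lE_{k-l}$ with no loss in $\nu$. Your chain $\nu^{-1}(\sum E)^2\lesssim C^2\nu^{1/2}$ is false, since $\nu^{-1}(C\nu^{1/2})^2=C^2$.

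Writing the nonlinearity as $\operatorname{div}(u\om)$ and putting $\|u\|_{L^\infty L^\infty}$ on one factor leaves $\|\om_{k-l}\|_{L^2L^2}\le\nu^{-1/4}|k-l|^{-1/2}E_{k-l}$, a $\nu^{-1/4}$ loss. Under non-slip conditions the unweighted $\|\om_k\|_{L^\infty L^2}$ is not part of $E_k$ and is only controlled with a $\nu^{1/4}|k|^{1/2}$ weight.

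The paper avoids this by using $\operatorname{curl}(u\cdot\na u)$, so that $f_1,f_2$ are components of $u\cdot\na u$.
\begin{itemize}
\item The term $u^{(1)}\pa_xu$ is bounded by $\|u^{(1)}_l\|_{L^\infty L^\infty}|k-l|\|u_{k-l}\|_{L^2L^2}$, i.e.\ by inviscid damping with no loss.
\item The term $u^{(2)}\pa_yu$ is handled by Hardy's inequality, $\|u^{(2)}_l/\sqrt{1-(2y-1)^2}\|_{L^2L^\infty}\lesssim\|\pa_yu^{(2)}_l\|_{L^2L^2}=|l|\|u^{(1)}_l\|_{L^2L^2}$. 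This is paired with $\|\sqrt{1-(2y-1)^2}\,\pa_yu_{k-l}\|_{L^\infty L^2}\lesssim\|\sqrt{1-(2y-1)^2}\,\om_{k-l}\|_{L^\infty L^2}$, which is why the weighted vorticity norm is part of $E_k$.
\end{itemize}

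The same Hardy bound controls $E_0$. The paper obtains $E_0$ from the heat equation for $u^{(1)}_0$ tested against $\pa_y^2u^{(1)}_0$, not from a plain energy estimate on $\om_0$, which has no boundary condition.

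For $\theta$ you likewise need three things to reach $G_k\lesssim\nu^{-1/2}\sum_lE_{k-l}G_l$:
\begin{itemize}
\item the divergence form $u\cdot\na\theta=\pa_x(u^{(1)}\theta)+\pa_y(u^{(2)}\theta)$;
\item the $H^{-1}$-type forcing costs $\min\{\nu^{-1/3}|k|^{4/3},\nu^{-1}\}$ and $\nu^{-1}$;
\item a split into $|k-l|\le|k|/2$ versus $|k-l|\ge|k|/2$.
\end{itemize}
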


\begin{remark}
To our knowledge, Theorem \ref{Th: tran thre} is the first transition threshold result for the Boussinesq equation studying the monotone shear flow. Moreover, this result matches the known optimal transition threshold for two cases:  monotone shear flow governed by the Navier-Stokes equation in finite channel with non-slip boundary condition \cite{Li-Shen-Zhang} and Couette flow described by the Boussinesq equation in infinite channel with non-slip boundary condition \cite{Liang-Wu-Zhai}.
\end{remark}

\begin{remark}
Inspired by Wei-Zhang \cite{Wei-Zhang-2} and Liang-Wu-Zhai \cite{Liang-Wu-Zhai}, we bound the forcing term $\pa_x \theta$ on the right hand of the first equation by the estimate of $|D_x|^{\frac13}\theta$ rather than $\theta$, gaining an extra factor $\nu^{\frac{1}{12}}$ than the transition threshold in \cite{Masmoudi-Zhai-Zhao}.
\end{remark}

Compared with Couette flow,  the Orr-Sommerfeld equation associated with monotone flows contains a non-local term, which destroys the good structure of the linear operator. Moreover, the time dependent linear operator $-\nu\De+U(y,t)\pa_x+\De^{-1}\pa_x$ also poses additional challenges in deriving space-time estimates.
We follow the approach proposed in \cite{Li-Shen-Zhang}, which employs the resolvent estimates and the frozen-time technique to derive the desired space-time estimates:
\begin{align*}
&|k|^2\|e^{\epsilon \nu^{\frac13}t}u\|^2_{L^2 L^2}+\nu^{\frac12} |k|\|e^{\epsilon \nu^{\frac13}t}\om\|^2_{L^2 L^2}+\|e^{\epsilon \nu^{\frac13}t} \sqrt{1-(2y-1)^2}\om\|^2_{L^\infty L^2}+|k|\|e^{\epsilon \nu^{\frac13}t} u\|^2_{L^\infty L^\infty} \notag\\
& \lesssim \|\om^{\mathrm{in}}_{k}\|^2_{L^2}+\nu^{-1}\|e^{\epsilon \nu^{\frac13}t}(f_1,f_2)\|^2_{L^2 L^2} +\min\{\nu^{-\frac13}|k|^{-\frac23},\nu^{-1}|k|^{-2}\}\|e^{\epsilon \nu^{\frac13}t}f_3\|^2_{L^2 L^2},
\end{align*}
and
\begin{align*}
&\|e^{\epsilon \nu^{\frac13}t}\theta\|^2_{L^\infty L^2}+\max\{\nu^{\frac13}|k|^{\frac23},\nu |k|^2\}\|e^{\epsilon \nu^{\frac13}t}\theta\|^2_{L^2 L^2}\\
& \lesssim\|\theta^{\mathrm{in}}_{k}\|^2_{L^2}+\min\{\nu^{-\frac13}|k|^{\frac43},\nu^{-1}\}\|e^{\epsilon \nu^{\frac13}t}g_1\|^2_{L^2 L^2}+\nu^{-1}\|e^{\epsilon \nu^{\frac13}t}g_2\|^2_{L^2 L^2}.
\end{align*}

Based on these space-time estimates, we finally derive the transition threshold by using the divergence formulation for the nonlinear term in the vorticity equation and a delicate analysis about the interactions of high-low frequencies between $u$ with $\om$ and $\theta$.

\medskip

\noindent{\bf Notations.}
Throughout this paper, $C$ denotes a general constant independent of $\nu, k, \la$, and it may vary from line to line. The notation $A\lesssim B$ means $A\leq CB$.
In the following, we will omit the subscript $k$ for $\om, \theta, \psi, \phi, F$ while still keep the dependence on $k$ in the actual estimates.

\section{Space-Time estimates of the linearized Boussinesq equations}\label{sec: space-time estimate}
In this section, we consider the linearized Boussinesq system
\begin{equation}\label{equ: om}
\left\{\begin{aligned}
&\pa_t\om-\nu(\pa^2_y-k^2)\om+ik U\om-ik \pa^{2}_{y}U\psi=-i k f_1-\pa_y f_2-f_3,\\
&\pat \theta-\nu (\pa^2_y-k^2) \theta+ikU\theta=-ik g_1-\pa_y g_2,\\
& (\pa^2_y-\al^2)\psi=\om,\\
&\psi(t,0)=\psi(t,1)=\pa_y \psi( t, 0)=\pa_y \psi( t, 1)=0,\quad \theta(t,0)=\theta(t,1)=0,\\
&\om(0,y)=\om_{k}^{\mathrm{in}}(y), \quad \theta(0,y)=\theta_{k}^{\mathrm{in}}(y).
\end{aligned}
\right.
\end{equation}

To obtain the space-time estimates of \eqref{equ: om}, we primarily follow the method developed in \cite{Chen-Li-Wei-Zhang,Li-Shen-Zhang}. Decompose the equation \eqref{equ: om} into its inhomogeneous and homogeneous components. The space-time estimates for these two components are  deduced by the resolvent estimates established in \cite{Li-Shen-Zhang}.

We first consider the Orr-Sommerfeld equation with the time-independent flow $V(y)$:
\begin{equation}\label{equ: om-freezed}
\left\{\begin{aligned}
&-\nu(\pa^2_y-k^2)w +ik(V-\la)w -ik V''\phi+o(\nu,k)w= F,\\
& (\pa^2_y-\al^2)\phi=w,\\
&\phi(t,0)=\phi(t,1)=\phi'( t, 0)=\phi'( t, 1)=0,\\
&w(0,y)=w^{\mathrm{in}}(y),
\end{aligned}
\right.
\end{equation}
where $\la\in \R$, $|o(\nu,k)|\ll (\nu k^{2})^{\frac{1}{3}}$ and $V(y)$ satisfies
\begin{align}\label{cond V}
V(y)\in H^4(0,1), \quad \inf_{y\in(0,1)}V'\geq c_{0}>0,\,\,\,\quad  V'' >0 \quad \text{or}\quad V''<0.
\end{align}

Decompose $w$ into
\begin{align}\label{equ:decomposition,wna,w1,w2}
w=w_{Na}+c_{1}w_{1}+c_{2}w_{2},
\end{align}
where
\begin{equation}\label{def: c1c2}
\begin{aligned}
c_{1}=\int^1_{0}\frac{\sinh  k(1-y)}{\sinh k}w_{Na}(y)dy,\qquad c_{2}=\int^1_{0}\frac{\sinh (ky)}{\sinh k}w_{Na}(y)dy,
\end{aligned}
\end{equation}
$w_{Na}$ solves the inhomogeneous OS equation with the Navier-slip boundary condition
\begin{equation}\label{equ:Navier-slip,OS}
\left\{
\begin{aligned}
&-\nu(\pa^2_y-k^2)w_{Na} +ik(V-\la)w_{Na} -ik V''\phi_{Na}+o(\nu,k)w_{Na} =F ,\\
&\phi_{Na}=(\pa^2_y-k^2)^{-1}w_{Na}, \quad \phi_{Na}(0)=\phi_{Na}(1)=w_{Na}(0)=w_{Na}(1)=0,\\
\end{aligned}
\right.
\end{equation}
and $w_1$, $w_2$ solve the following homogeneous OS equations
\begin{equation}\label{equ: psi1}
\left\{
\begin{aligned}
&-\nu(\pa^2_y-k^2)w_{1}+i k(V-\la)w_{1}-ik V''\phi_{1}+o(\nu,k)w_{1}=0,\\
&\phi_{1}=(\pa^2_y-k^2)^{-1}w_{1},\quad \phi_{1}(0)=\phi_{1}(1)=0,\quad \pa_y\phi_{1}(0)=0, \quad \pa_y\phi_{1}( 1)=1,
\end{aligned}
\right.
\end{equation}
and
\begin{equation}\label{equ: psi2}
\left\{
\begin{aligned}
&-\nu(\pa^2_y-k^2)w_{2}+i k(V-\la)w_{2}-ik V''\phi_{2}+o(\nu,k)w_{2}=0,\\
&\phi_{2}=(\pa^2_y-k^2)^{-1}w_{2},\quad \phi_{2}(0)=\phi_{2}(1)=0,\quad \pa_y\phi_{2}(0)=1,\quad \pa_y\phi_{2}(1)=0.
\end{aligned}
\right.
\end{equation}

The following estimates are derived from \cite{Li-Shen-Zhang,Chen-Li-Wei-Zhang}:
\begin{itemize}
\item Resolvent estimates with the Navier-slip boundary condition:
 \begin{align}
 \nu^{\frac{1}{6}}|k|^{\frac{4}{3}}\|u_{Na}\|_{L^2}+\nu^{\frac{1}{3}} k^{\frac{2}{3}}\|w_{Na}\|_{L^2}+\nu^{\frac23}|k|^{\frac13}\|(\pa_y,k)w_{Na}\|_{L^2}\lesssim&  \|F\|_{L^2},\label{esti: wL2}\\
\nu^{\frac{1}{6}}|k|^{\frac43}\|w_{Na}\|_{L^2}+\nu^{\frac{1}{12}}|k|^{\frac{5}{3}}\|w_{Na}\|_{L^1}+ |k|^2\|(V(y)-\la)w_{Na}\|_{L^2} \lesssim & \|F\|_{H_{k}^{1}}, \label{esti: wH1}\\
\nu^{\frac{1}{2}}  |k|\left\|u_{Na}\right\|_{L^2}+\nu^{\frac{2}{3}} |k|^{\frac{1}{3}}\|w_{Na}\|_{L^2}+\nu\|(\pa_y,k) w_{Na}\|_{L^2}
 \lesssim & \|F\|_{H_{k}^{-1}}, \label{esti: wH-1}
\end{align}
where the velocity $u_{Na}=(\pa_y\phi_{Na},-ik\phi_{Na})$, and the norms are given by
\begin{align*}
\|F\|_{H_{k}^{1}}=\|(\pa_y,k) F\|_{L^2},\quad \|F\|_{H_{k}^{-1}}=\sup \{|\lan F,\phi\ran|: \phi\in H^1_0([0,1]), \, \|\phi\|_{H_{k}^{1}}=1\}.
\end{align*}

\item Estimates for the boundary layer correctors:
\begin{align}
&(1+|k( \la-V(0))|)^{\frac34}|c_1|\|\rho^{\frac12}_{k} w_{1}\|_{L^2}+(1+|k( \la-V(1))|)^{\frac34}|c_2|\|\rho^{\frac12}_{k} w_{2}\|_{L^2}\label{c1rho+FL2}\\
&+\nu^{\frac18}|k|^{\frac14}(1+|k( \la-V(0))|)^{\frac38}|c_1|\|\rho^{-\frac14}_{k} w_{1}\|_{L^2}\notag\\
&+\nu^{\frac18}|k|^{\frac14}(1+|k( \la-V(1))|)^{\frac38}|c_2|\|\rho^{-\frac14}_{k} w_{2}\|_{L^2}
 \lesssim \nu^{-\frac13}|k|^{-\frac23}\|F\|_{L^2}, \notag\\
&(1+|k(\la-V(0))|)^{\frac34}|c_1|\|\rho^{\frac12}_k w_{1}\|_{L^2}+(1+|k(\la-V(1))|)^{\frac34}|c_2|\|\rho^{\frac12}_k w_{2}\|_{L^2} \label{c1rho+H-1}\\
&+\nu^{\frac18}|k|^{\frac14}(1+|k(\la-V(0))|)^{\frac38}|c_1|\|\rho^{-\frac14}_k w_{1}\|_{L^2}\notag \\
&+\nu^{\frac18}|k|^{\frac14}(1+|k(\la-V(1))|)^{\frac38}|c_2|\|\rho^{-\frac14}_k w_{2}\|_{L^2}
\lesssim \nu^{-\frac{2}{3}}|k|^{-\frac{1}{3}}\|F\|_{H_{k}^{-1}},\notag\\
&(1+|k( \la-V(0))|)^{\frac34}|c_1|\|w_{1}\|_{L^2}+(1+|k( \la-V(1))|)^{\frac34}|c_2|\|w_{2}\|_{L^2}\lesssim \nu^{-\frac{1}{3}}|k|^{-\frac53}\|F\|_{H_{k}^{1}},\label{c1 FH1}
\end{align}
and
\begin{align}\label{esti:rho}
\int_{\R}(|\la- kV(0)|^{\frac32}\rho_k(y)+(\nu k^2)^{\frac14}|\la-kV(0)|^{\frac34}\rho^{-\frac12}_k(y))^{-1} d\la
=C \nu^{-\frac16}| k|^{-\frac13},
\end{align}
where
\begin{equation}\label{equ:def,rho,k}
\rho_{k}(y)=\left\{
\begin{aligned}
&\nu^{-\frac13}|k|^{\frac13}y, \qquad\qquad\, y\in [0,\nu^{\frac13}|k|^{-\frac13}], \\
&1, \qquad\qquad\qquad\quad\,\, y\in [\nu^{\frac13}|k|^{-\frac13}, 1-\nu^{\frac13}|k|^{-\frac13}],\\
&\nu^{-\frac13}|k|^{\frac13}(1-y), \qquad y\in [1-\nu^{\frac13}|k|^{-\frac13}, 1].
\end{aligned}
\right.
\end{equation}
\end{itemize}

Let $(\om,\psi)$ be the solution to
\begin{equation}\label{equ: om const}
\left\{\begin{aligned}
&\pa_t \om -\nu(\pa^2_y-k^2)\om +ikV\om -ik V''\psi=-ik f_1-\pa_y f_2-f_3-f_4,\\
& (\pa^2_y-\al^2)\psi=\om,\\
&\psi(t,0)=\psi(t,1)=\psi'( t, 0)=\psi'( t, 1)=0,\\
&\om(0,y)=\om_{k}^{\mathrm{in}}(y).
\end{aligned}
\right.
\end{equation}
\begin{lemma}\label{lemma: space-time froze}
Let $\nu k^2\leq \|V'\|_{L^\infty}$ and $(\om,\psi)$ be the solution to \eqref{equ: om const}. Then there exist constants $\nu_0>0$ and $\epsilon_0>0$ such that, for $\nu\in(0,\nu_0]$, $\epsilon\in[0,\epsilon_{0}]$, we have
\begin{align*}
&|k|^2\|e^{\epsilon \nu^{\frac13}t}u\|^2_{L^2 L^2}+\nu^{\frac12} |k|\|e^{\epsilon \nu^{\frac13}t}\om\|^2_{L^2 L^2}+\|e^{\epsilon \nu^{\frac13}t} \sqrt{1-(2y-1)^2}\om\|^2_{L^\infty L^2}+|k|\|e^{\epsilon \nu^{\frac13}t} u\|^2_{L^\infty L^\infty} \\
\lesssim& E^{\mathrm{in}}+ \nu^{-1}\|e^{\epsilon \nu^{\frac13} t}(f_1, f_2)\|^2_{L^2 L^2}+\nu^{-\frac13}|k|^{-\frac23}\|e^{\epsilon \nu^{\frac13} t}f_3\|^2_{L^2 L^2}+\nu^{-\frac{1}{6}}|k|^{-\frac73}\|e^{\epsilon \nu^{\frac13} t}f_4\|^2_{L^2 H_{k}^{1}},
\end{align*}
where $E^{\mathrm{in}}=:|k|^{-2} \|\pa_y\om^{\mathrm{in}}\|^2_{L^2}+\|u^{\mathrm{in}}\|^2_{H^1}$.
\end{lemma}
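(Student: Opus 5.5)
We split $\om=\om_I+\om_H$, where $\om_I$ solves \eqref{equ: om const} with zero initial data and the full forcing $-ikf_1-\pa_yf_2-f_3-f_4$, and $\om_H$ solves the homogeneous problem with initial datum $\om^{\mathrm{in}}_k$. Since $V$ is time independent and $e^{\epsilon\nu^{1/3}t}$ only shifts the spectral parameter by $\epsilon\nu^{1/3}\ll(\nu k^2)^{1/3}$ (this is where $\nu k^2\le\|V'\|_{L^\infty}$ is used, so that $|k|\ge$ order one and $\epsilon\nu^{1/3}$ is absorbed in $o(\nu,k)$), we may assume $\epsilon=0$ after replacing $\om$ by $e^{\epsilon\nu^{1/3}t}\om$.

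\textbf{Inhomogeneous part.} Extend the forcing by zero for $t<0$ and take the Fourier transform in time. The transformed solution $\hat w(\la)$ solves \eqref{equ: om-freezed} with $\la$ replaced by $\la/k$ and $F=\hat{\text{forcing}}$. By Plancherel, $\|u\|_{L^2L^2}^2=\int\|\hat u(\la)\|_{L^2}^2d\la$, so it suffices to bound $\hat w$ pointwise in $\la$.

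We decompose $\hat w=w_{Na}+c_1w_1+c_2w_2$ via \eqref{equ:decomposition,wna,w1,w2} and split $F$ into its pieces:
\begin{itemize}
\item $ikf_1+\pa_yf_2$ has $H^{-1}_k$ norm $\lesssim\|(f_1,f_2)\|_{L^2}$, so we use \eqref{esti: wH-1} and \eqref{c1rho+H-1};
\item $f_3$ is handled in $L^2$ through \eqref{esti: wL2} and \eqref{c1rho+FL2};
\item $f_4$ is handled in $H^1_k$ through \eqref{esti: wH1} and \eqref{c1 FH1}.
\end{itemize}
For the corrector parts, the velocity of $w_j$ is controlled by $\|\rho_k^{1/2}w_j\|$ through a weighted duality (testing against $\phi_j$, which vanishes to second order at the walls). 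The gain in $(1+|k(\la-V(j))|)^{-3/4}$ then gives square integrability in $\la$.

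These give, for instance, $|k|^2\|\hat u\|^2\lesssim\nu^{-1}\|(f_1,f_2)\|^2$ from \eqref{esti: wH-1}, and $\nu^{1/2}|k|\|\hat w\|^2\lesssim\nu^{1/2}|k|\nu^{-4/3}|k|^{-2/3}\|F\|_{H^{-1}}^2\le\nu^{-1}\|F\|^2$ since $\nu k^2\lesssim1$. The $f_3$ and $f_4$ weights follow in the same way.

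For the $L^\infty_t$ quantities on the inhomogeneous part, we use the energy identity. Testing the equation with the weight $\sqrt{1-(2y-1)^2}$ squared (or with $-\psi$ for the velocity) controls the sup in time by the space-time norms already obtained plus forcing terms. The $L^\infty$ bound on $u$ comes from $\|u\|_{L^\infty}^2\lesssim\|u\|_{L^2}\|\pa_yu\|_{L^2}$ combined with the weighted vorticity. The integral \eqref{esti:rho} is what converts the $\rho_k^{-1/4}$ bounds, after integration in $\la$, into the $\nu^{-1/6}|k|^{-1/3}$ factors needed for the boundary-layer contributions.

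\textbf{Homogeneous part.} We write $\om_H=e^{-t}\om^{\mathrm{in}}\chi+\tilde\om$, or more simply $\om_H=\om^{\mathrm{in}}e^{-\nu^{1/3}t}+\tilde\om$. Then $\tilde\om$ has zero initial data and forcing built from $\om^{\mathrm{in}}$:
\begin{itemize}
\item $\nu\pa_y^2\om^{\mathrm{in}}$, in divergence form $\pa_y(\nu\pa_y\om^{\mathrm{in}})$;
\item $ikV\om^{\mathrm{in}}$ and $ikV''\psi^{\mathrm{in}}$, written as $ik(V\om^{\mathrm{in}})$ in $f_1$ form, or in $f_4$ form.
\end{itemize}
Choosing the forms that match the weights should yield $E^{\mathrm{in}}=|k|^{-2}\|\pa_y\om^{\mathrm{in}}\|^2+\|u^{\mathrm{in}}\|_{H^1}^2$. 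An $f_4$ bound with $H^1_k$ of $\om^{\mathrm{in}}$ and weight $\nu^{-1/6}|k|^{-7/3}\cdot k^2\|\om^{\mathrm{in}}\|_{H^1_k}^2\cdot\nu^{-1/3}$ is the delicate accounting. Instead we apply the resolvent bounds directly in the Laplace variable to the homogeneous semigroup, using $F=\om^{\mathrm{in}}$ with the $H^1_k$ estimate \eqref{esti: wH1}, \eqref{c1 FH1}, which is why $\pa_y\om^{\mathrm{in}}$ appears.

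\textbf{Main obstacle.} The hard step is the boundary-corrector contribution to the $L^\infty L^2$ weighted vorticity and the $L^\infty L^\infty$ velocity. This requires the $\rho_k$-weighted bounds together with \eqref{esti:rho} and a careful choice of the weight $\sqrt{1-(2y-1)^2}$, which degenerates exactly in the boundary layer.
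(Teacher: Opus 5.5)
\textbf{There is a genuine gap: neither supremum-in-time term on the left is established by your argument.}

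Your plan for the $L^2_tL^2_y$ quantities is the same framework the paper imports from \cite{Li-Shen-Zhang}: Fourier transform in $t$, the splitting $w=w_{Na}+c_1w_1+c_2w_2$, and pairing $f_1,f_2$ with $H^{-1}_k$, $f_3$ with $L^2$, and $f_4$ with $H^1_k$. The part that actually needs proof here is the two $L^\infty_t$ terms, and your mechanism for them does not close.

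If you test the no-slip equation for $\om_I$ against $(1-(2y-1)^2)\bar\om_I$, the weight vanishes at the walls but its derivative does not. Integrating the viscous term by parts therefore leaves $2\nu\big(|\om_I(t,0)|^2+|\om_I(t,1)|^2\big)$ on the right-hand side, with the bad sign. None of $|k|^2\|u\|^2_{L^2L^2}$, $\nu^{\frac12}|k|\|\om\|^2_{L^2L^2}$ or $\nu^{\frac13}|k|^{\frac23}\|\rho_k^{\frac12}\om\|^2_{L^2L^2}$ controls $\nu\int_0^T|\om_I(t,0)|^2dt$.

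Similarly, $\|u\|^2_{L^\infty}\lesssim\|u\|_{L^2}\|\pa_y u\|_{L^2}$ requires $\|\pa_y u\|_{L^2}\sim\|\om\|_{L^2}$, which is the \emph{unweighted} vorticity. The weight $\sqrt{1-(2y-1)^2}$ degenerates exactly in the boundary layer, so the weighted norm cannot replace it. Your sketch never bounds $\om$ itself in $L^\infty_tL^2_y$, and both supremum-in-time estimates depend on that quantity.

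The paper isolates exactly this quantity. Using the argument of \cite{Chen-Li-Wei-Zhang}, it reduces the lemma to \eqref{esti: omconst}, whose only new term is $\nu^{\frac12}|k|\|e^{\epsilon\nu^{\frac13}t}\om\|^2_{L^\infty L^2}$. For $\om_I$ this is proved by a further splitting:
\begin{enumerate}
\item The Navier-slip pieces $\om^{(j)}_{Na}$ vanish at $y=0,1$. The plain energy estimate therefore has no trace term and closes using the space-time bounds, as in \eqref{omNa1LinftyL2}.
\item The boundary-layer pieces $\om^{(j)}_l$ are bounded pointwise in time from the inverse Fourier formula, $|e^{\epsilon\nu^{\frac13}t}\om^{(1)}_1(t,y)|\le\int|c_1w_1(\la,y)|\,d\la$. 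One then applies Cauchy--Schwarz in $\la$ against the $y$-dependent weight $|\la-kV(0)|^{\frac32}\rho_k(y)+(\nu k^2)^{\frac14}|\la-kV(0)|^{\frac34}\rho_k^{-\frac12}(y)$. By \eqref{esti:rho}, the reciprocal of this weight has $\la$-integral of size $\nu^{-\frac16}|k|^{-\frac13}$, uniformly in $y$.
\item Integrating in $y$ and using \eqref{c1rho+FL2} and \eqref{c1rho+H-1} gives the bound for the $f_1,f_2,f_3$ correctors. For $f_4$ one uses instead \eqref{c1 FH1} together with $(1+|\la-kV(0)|)^{-\frac34}\in L^2_\la$.
\end{enumerate}
This pointwise-in-time Cauchy--Schwarz is the missing idea. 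You cite \eqref{esti:rho}, but you attach it to no concrete estimate and leave the corrector contribution as the ``main obstacle''.

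Two smaller points. First, $\nu k^2\le\|V'\|_{L^\infty}$ bounds $|k|$ from above and puts you in the regime where the resolvent estimates hold. Absorbing $\epsilon\nu^{\frac13}$ into $o(\nu,k)$ uses $|k|\ge1$ instead. Second, subtracting $\om^{\mathrm{in}}e^{-\nu^{\frac13}t}$ in the homogeneous part creates the forcing $ikV\om^{\mathrm{in}}e^{-\nu^{\frac13}t}$, which costs about $\nu^{-\frac43}\|\om^{\mathrm{in}}\|^2_{L^2}$ in the $f_1$ slot, far more than $E^{\mathrm{in}}$. The paper does not go this way; it defers $\om_H$ to \cite[Lemma 3.5]{Li-Shen-Zhang}.
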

\begin{proof}
Using the method employed in the proofs of \cite[Propositions 6.1 and 6.7]{Chen-Li-Wei-Zhang}, we are left to prove
\begin{align}\label{esti: omconst}
&\nu^{\frac12}|k|\|e^{\epsilon \nu^{\frac13}t}\om\|^2_{L^\infty L^2}+|k|^2\bn{e^{\epsilon \nu^{\frac13}t} u}^2_{L^2L^2}+\nu^{\frac12} | k| \|e^{\epsilon \nu^{\frac13}t}\om\|^2_{L^2L^2}+\nu^{\frac13}|k|^{\frac23}\|e^{\epsilon \nu^{\frac13}t}\rho^{\frac12}_k\om\|^2_{L^2L^2}\\
\lesssim& E^{\mathrm{in}}+ \nu^{-1}\|e^{\epsilon \nu^{\frac13} t}(f_1, f_2)\|^2_{L^2 L^2}+\nu^{-\frac13}|k|^{-\frac23}\|e^{\epsilon \nu^{\frac13} t}f_3\|^2_{L^2 L^2}+\nu^{-\frac{1}{6}}|k|^{-\frac73}\|e^{\epsilon \nu^{\frac13} t}f_4\|^2_{L^2 H_{k}^{1}}.\notag
\end{align}

The estimates for the last three energy terms have been established in \cite{Li-Shen-Zhang} that
\begin{align*}
&|k|^2\bn{e^{\epsilon \nu^{\frac13}t} u}^2_{L^2L^2}+\nu^{\frac12} | k| \|e^{\epsilon \nu^{\frac13}t}\om\|^2_{L^2L^2}+\nu^{\frac13}|k|^{\frac23}\|e^{\epsilon \nu^{\frac13}t}\rho^{\frac12}_k\om\|^2_{L^2L^2}\\
\lesssim&  \nu^{-1}\|e^{\epsilon \nu^{\frac13} t}(f_1, f_2)\|^2_{L^2 L^2}+\nu^{-\frac13}|k|^{-\frac23}\|e^{\epsilon \nu^{\frac13} t}f_3\|^2_{L^2 L^2}+\nu^{-\frac{1}{6}}|k|^{-\frac73}\|e^{\epsilon \nu^{\frac13} t}f_4\|^2_{L^2 H_{k}^{1}}.
\end{align*}

Now, we focus on the bound of $\nu^{\frac12}|k|\|e^{\epsilon \nu^{\frac13}t}\om\|^2_{L^\infty L^2}$.
Give the decomposition
$$\om=\om_{I}+\om_{H},$$
where the inhomogeneous part $\om_{I}$ satisfies
\begin{equation}\label{equ: omI}
\left\{
\begin{aligned}
&\pa_t \om_{I}-\nu(\pa^2_y- k^2)\om_{I}+i kV\om_{I}-ik \pa^2_y V\psi_{I}=-i k f_1-\pa_y f_2-f_3-f_4, \\
&\om_{I}=(\pa^2_y- k^2)\psi_{I},\quad \psi_{I}(0)=\psi_{I}(1)=\psi'_{I}(0)=\psi'_{I}(1)=0,\\
&\om_{I}(0,y)=0,
\end{aligned}
\right.
\end{equation}
and the homogeneous part $\om_{H}$ satisfies
\begin{equation}\label{equ: omH}
\left\{
\begin{aligned}
&\pa_t \om_{H}-\nu(\pa^2_y- k^2)\om_{H}+i kV\om_{H}-ik \pa^2_y V\psi_{H}=0,\\
&\om_{H}=(\pa^2_y- k^2)\psi_{H},\quad \psi_{H}(0)=\psi_{H}(1)=\psi'_{H}(0)=\psi'_{H}(1)=0,\\
&\om_{H}(0,y)=\om_{k}^{\mathrm{in}}(y).
\end{aligned}
\right.
\end{equation}

Let
\begin{equation}\label{equ: omNa1}
\left\{
\begin{aligned}
&\pa_t \om^{(j)}_{Na}-\nu (\pa^2_y-k^2)\om^{(j)}_{Na}+ikV\om^{(j)}_{Na}-ikV''\psi^{(j)}_{Na}=-h_{j},\quad j\in \{1,2,3,4\},\\
&\om^{(j)}_{Na}(t,k,0)=\om^{(j)}_{Na}(t,k,1)=0, \quad \psi^{(j)}_{Na}(t,k,0)=\psi^{(j)}_{Na}(t,k,1)=0,
\end{aligned}
\right.
\end{equation}
with
$$h_{1}=ikf_1,\qquad h_{2}=\pa_y f_2,\qquad h_{3}=f_3,\qquad h_{4}=f_4.$$
We further decompose $\om_{I}$ into
\begin{align*}
\om_{I}=\sum_{1\leq j\leq 4}\om^{(j)}_{Na}+\sum_{1\leq j\leq 4}\sum_{1\leq l\leq 2}\om^{(j)}_{l},
\end{align*}
where for $t>0$, $j\in\{1,2,3,4\}$, $l\in \{1,2\}$,
\begin{align*}
&e^{\epsilon \nu^{\frac13} t}\om^{(j)}_{Na}(t, k,y) =\frac{1}{2\pi}\int_{\R}w^{(j)}_{Na}(\la, k,y) e^{it\la} d\la, \\
&e^{\epsilon \nu^{\frac13} t}\om^{(j)}_{l}(t, k,y)=\frac{1}{2\pi}\int_{0}^{\infty} c^{(j)}_{1}w^{(j)}_{l}(\la, k,y)e^{it\la} d\la, \\
& c^{(j)}_{1}=\int^1_{0}\frac{\sinh k(1-y)}{\sinh k}w^{(j)}_{Na}(y)dy,\qquad c^{(j)}_{2}=\int^1_{0}\frac{\sinh (ky)}{\sinh k}w^{(j)}_{Na}(y)dy,
\end{align*}
with
\begin{align*}
\left\{
\begin{aligned}
& -\nu (\pa^2_y-k^2)w^{(j)}_{1}+ik (V-\la/k)w^{(j)}_{1}-ikV''\phi^{(j)}_{1}-\epsilon\nu^{\frac13}w^{(j)}_{1}=0,\\
&\phi^{(j)}_{1}(t,k,0)=\phi^{(j)}_{1}(t,k,1)=0, \quad \pa_y \phi^{(j)}_{1}(t,k,0)=0, \quad \pa_y\phi^{(j)}_{1}(t,k,1)=1.
\end{aligned}
\right.
\end{align*}
and
\begin{align*}
\left\{
\begin{aligned}
& -\nu (\pa^2_y-k^2)w^{(j)}_{2}+ik (V-\la/k)w^{(j)}_{2}-ikV''\phi^{(j)}_{2}-\epsilon\nu^{\frac13}w^{(j)}_{2}=0,\\
&\phi^{(j)}_{2}(t,k,0)=\phi^{(j)}_{2}(t,k,1)=0, \quad \pa_y \phi^{(j)}_{2}(t,k,0)=1, \quad \pa_y\phi^{(j)}_{2}(t,k,1)=0.
\end{aligned}
\right.
\end{align*}

We use \eqref{esti: wL2} and Plancherel's theorem to get
\begin{align}
&\nu^{\frac13}|k|^{\frac23}\|e^{\epsilon \nu^{\frac13}t}\om^{(1)}_{Na}\|^2_{L^2L^2}+|k|^{2}\|e^{\epsilon \nu^{\frac13}t}u^{(1)}_{Na}\|^2_{L^2L^2}\label{omNa1L2L2}\\
\sim & \nu^{\frac13}|k|^{\frac23}\|w^{(1)}_{Na}\|^2_{L^2L^2}+|k|^2 \|(\pa_y, k)\phi^{(1)}_{Na}\|^2_{L^2L^2} \notag\\
\lesssim &\nu^{-\frac13}|k|^{\frac43}\|e^{\epsilon \nu^{\frac13}t}f_1\|^2_{L^2L^2}\lesssim \nu^{-1}\|e^{\epsilon \nu^{\frac13}t}f_1\|^2_{L^2L^2}.\notag
\end{align}
Testing \eqref{equ: omNa1} by $\om^{(1)}_{Na}$ and taking the real part, we obtain
\begin{align*}
\frac12 \frac{d}{dt}\|\om^{(1)}_{Na}\|^2_{L^2}+\nu \|(\pa_y, k)\om^{(1)}_{Na}\|^2_{L^2}
\leq |\lan k V'' \psi^{(1)}_{Na}, \pa_y \psi^{(1)}_{Na} \ran|+\nu^{-1}\|e^{\epsilon \nu^{\frac13} t}f_1\|^2_{ L^2}+\frac12\nu \| k\om^{(1)}_{Na}\|^2_{L^2},
\end{align*}
which, together with \eqref{omNa1L2L2}, implies
\begin{align}\label{omNa1LinftyL2}
&\|e^{\epsilon \nu^{\frac13}t}\om^{(1)}_{Na}\|^2_{L^\infty L^2}+\nu \|e^{\epsilon \nu^{\frac13}t}(\pa_y, k)\om^{(1)}_{Na}\|^2_{L^2L^2}\\
\lesssim & \nu^{\frac13}|k|^{\frac23}\|e^{\epsilon \nu^{\frac13}t}(\pa_y, k)\om^{(1)}_{Na}\|^2_{L^2L^2}+\|e^{\epsilon \nu^{\frac13}t}u^{(1)}_{Na}\|^2_{L^2L^2}+\nu^{-1}\|e^{\epsilon \nu^{\frac13} t}f_1\|^2_{L^2 L^2} \notag\\
\lesssim &\nu^{-1}\|e^{\epsilon \nu^{\frac13} t}f_1\|^2_{L^2 L^2}.\notag
\end{align}

Similarly, we also have
\begin{align}\label{omNaf1-4L2}
&\|e^{\epsilon \nu^{\frac13}t}\om^{(2)}_{Na}\|^2_{L^\infty L^2}+\|e^{\epsilon \nu^{\frac13}t}\om^{(3)}_{Na}\|^2_{L^\infty L^2}+\|e^{\epsilon \nu^{\frac13}t}\om^{(4)}_{Na}\|^2_{L^\infty L^2}\\
\lesssim & \nu^{-1}\|e^{\epsilon \nu^{\frac13} t} f_2\|^2_{L^2 L^2}+\nu^{-\frac13}|k|^{-\frac23}\|e^{\epsilon \nu^{\frac13} t}f_3\|^2_{L^2 L^2}+\nu^{-\frac{1}{6}}|k|^{-\frac73}\|e^{\epsilon \nu^{\frac13} t}f_4\|^2_{L^2 H_{k}^{1}}.\notag
\end{align}

For $\om^{(1)}_{1}$, a direct calculation gives
\begin{equation}\label{om21}
\begin{aligned}
&|e^{\epsilon \nu^{\frac13}t}\om^{(1)}_{1}(t, k,y)|^2\leq \Big|\int_{\R}|c^{(1)}_{1}w^{(1)}_{1}(\la, k,y)| d\la\Big|^2\\
\leq& \int_{\R}|\la- kV(0)|^{\frac32}\rho_k(y)|c^{(1)}_{1}w^{(1)}_{1}|^2+\nu^{\frac14}| k|^{\frac12}|\la- kV(0)|^{\frac34}\rho^{-\frac12}_k (y)|c^{(1)}_{1}w^{(1)}_{1}|^2 d\la\\
&\times \int_{\R} \big(|\la- kV(0)|^{\frac32}\rho_k(y)+(\nu k^2)^{\frac14}|\la- kV(0)|^{\frac34}\rho^{-\frac12}_k(y)\big)^{-1} d\la.
\end{aligned}
\end{equation}
In terms of \eqref{c1rho+FL2} and \eqref{esti:rho}, we obtain
\begin{align*}
&\|e^{\epsilon \nu^{\frac13}t}\om^{(1)}_{1}(t)\|^2_{L^2_y}\\
\lesssim &\nu^{-\frac16}| k|^{-\frac13}\int_{\R}\Big(|\la-kV(0)|^{\frac32}\|\rho^{\frac12}_kw^{(1)}_{1}\|^2_{L^2_y}+\nu^{\frac14}| k|^{\frac12}|\la-kV(0)|^{\frac34}\|\rho^{-\frac14}_kw^{(1)}_{1}\|^2_{L^2_y} \Big)|c^{(1)}_{1}|^2 d\la \\
\lesssim & \nu^{-\frac16}| k|^{-\frac13}\int_{\R}\nu^{-\frac23}|k|^{\frac23}\|F_1(\la)\|^2_{L^2_y} d\la\lesssim \nu^{-\frac56}| k|^{\frac{1}{3}}\| e^{\epsilon \nu^{\frac13}t} f_1\|^2_{L^2 L^2},
\end{align*}
which implies
\begin{align}\label{om21L2f2L2}
\nu^{\frac12} | k|\|e^{\epsilon \nu^{\frac13}t}\om^{(1)}_{1}\|^2_{L^\infty L^2}\lesssim  \nu^{-1} \|e^{\epsilon \nu^{\frac13}t}f_1\|^2_{L^2L^2}.
\end{align}

Similarly, we can use \eqref{c1rho+FL2} and \eqref{c1rho+H-1} to derive
\begin{align}
\nu^{\frac12} | k|\|e^{\epsilon \nu^{\frac13}t}\om^{(1)}_{2}\|^2_{L^\infty L^2}\lesssim & \nu^{-1} \|e^{\epsilon \nu^{\frac13}t}f_1\|^2_{L^2L^2}, \label{om12L2f1L2}\\
\nu^{\frac12} | k|\|e^{\epsilon \nu^{\frac13}t}\om^{(2)}_{1}\|^2_{L^\infty L^2}+\nu^{\frac12} | k|\|e^{\epsilon \nu^{\frac13}t}\om^{(2)}_{2}\|^2_{L^\infty L^2}\lesssim &\nu^{-1}\|e^{\epsilon \nu^{\frac13}t}f_2\|^2_{L^2L^2},\label{om2122f2L2} \\
\nu^{\frac12} | k|\|e^{\epsilon \nu^{\frac13}t}\om^{(3)}_{1}\|^2_{L^\infty L^2}+\nu^{\frac12} | k|\|e^{\epsilon \nu^{\frac13}t}\om^{(3)}_{2}\|^2_{L^\infty L^2}\lesssim &\nu^{-\frac13} | k|^{-\frac23}\|e^{\epsilon \nu^{\frac13}t}f_3\|^2_{L^2L^2},\label{om3132f3L2}
\end{align}

For $\om^{(4)}_{1}$, using \eqref{c1 FH1}, we get
\begin{align*}
\|e^{\epsilon \nu^{\frac13}t}\om^{(4)}_{1}\|_{L^2}\leq& \frac{1}{2\pi}\int_{\R}|c^{(1)}_{1}|\|w^{(4)}_{1}\|_{L^2}d\la\\
\lesssim & \|(1+|\la-kV(0)|)^{-\frac34}\nu^{-\frac{1}{3}}|k|^{-\frac76} \|F_4\|_{H^1_{k}}\|_{L^1}\\
\lesssim & \nu^{-\frac13}|k|^{-\frac76}\|(1+|\la-kV(0)|)^{-\frac34}\|_{L^2}\|F_4\|_{L^2H^1_k}\leq \nu^{-\frac13}|k|^{-\frac76}\|e^{\epsilon \nu^{\frac13}t}f_4\|_{L^2H^1_k},
\end{align*}
which implies
\begin{align}\label{om41L2f4L2}
\nu^{\frac12} | k|\|e^{\epsilon \nu^{\frac13}t}\om^{(4)}_{1}\|^2_{L^\infty L^2}\leq \nu^{-\frac16}|k|^{-\frac43}\|e^{\epsilon \nu^{\frac13}t}f_4\|^2_{L^2H^1_k}.
\end{align}
Similarly, we also have
\begin{align}\label{om43L2f4L2}
\nu^{\frac12} | k|\|e^{\epsilon \nu^{\frac13}t}\om^{(4)}_{2}\|^2_{L^\infty L^2}\leq \nu^{-\frac16}|k|^{-\frac43}\|e^{\epsilon \nu^{\frac13}t}f_4\|^2_{L^2H^1_k}.
\end{align}

Combining  \eqref{omNa1LinftyL2}, \eqref{omNaf1-4L2}, and \eqref{om21L2f2L2}--\eqref{om43L2f4L2}, we derive
\begin{align*}
&\nu^{\frac12} |k|\|e^{\epsilon \nu^{\frac13}t}\om_{I}\|^2_{L^\infty L^2}\\
\lesssim & \nu^{-1}\|e^{\epsilon \nu^{\frac13} t}(f_1, f_2)\|^2_{L^2 L^2}+\nu^{-\frac13}|k|^{-\frac23}\|e^{\epsilon \nu^{\frac13} t}f_3\|^2_{L^2 L^2}+\nu^{-\frac{1}{6}}|k|^{-\frac73}\|e^{\epsilon \nu^{\frac13} t}f_4\|^2_{L^2 H_{k}^{1}}.
\end{align*}

The proof for the estimate $\|e^{\epsilon \nu^{\frac13}t}\om_{H}\|_{L^\infty L^2}$ is similar to the process as in \cite[Lemma 3.5]{Li-Shen-Zhang}. Hence, we complete the proof of Lemma \ref{lemma: space-time froze}.
\end{proof}

A similar proof as in \cite{Masmoudi-Zhai-Zhao} gives the following result.
\begin{lemma}\label{space-time thete}
Let $\theta$ be the solution of
\begin{equation}
\left\{
\begin{aligned}
&\pat \theta-\nu (\pa^2_y-k^2) \theta+ikV\theta=-ik g_1-\pa_y g_2,\\
&\theta(t,0)=\theta(t,1)=0,\\
&\theta(0,y)=\theta_{k}^{\mathrm{in}}(y).
\end{aligned}
\right.
\end{equation}
Then, there exists $\epsilon_0\in(0,1)$ such that for $\epsilon\in(0,\epsilon_0)$, we have
\begin{align*}
&\|e^{\epsilon_0 \nu^{\frac13}t}\theta\|^2_{L^\infty L^2}+\max\{\nu^{\frac13}|k|^{\frac23},\nu |k|^2\}\|e^{\epsilon_0 \nu^{\frac13}t}\theta\|^2_{L^2L^2}\\
\leq & \|\theta^{\mathrm{in}}\|^2_{L^2}+\min\{\nu^{-\frac13}|k|^{\frac43},\nu^{-1}\}\|e^{\epsilon_0 \nu^{\frac13}t}g_{1}\|^2_{L^2L^2}+\nu^{-1}\|e^{\epsilon_0 \nu^{\frac13}t}g_{2}\|^2_{L^2L^2}.
\end{align*}
\end{lemma}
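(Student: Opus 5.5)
We prove the estimate by combining two ingredients: a plain energy estimate, which gives the $L^\infty L^2$ bound and the $\nu|k|^2$ dissipation, and a resolvent estimate for the transport–diffusion operator $-\nu(\pa_y^2-k^2)+ikV$ with Dirichlet boundary conditions, which gives the enhanced rate $\nu^{\frac13}|k|^{\frac23}$. Unlike Lemma \ref{lemma: space-time froze}, there is no nonlocal term $V''\phi$ and no boundary-layer corrector, since the Dirichlet condition for $\theta$ is exactly the one natural to the operator. The frozen-time argument is therefore unnecessary.

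\textbf{Step 1 (energy estimate).} We test the equation by $\theta$ and take the real part. The term $ikV|\theta|^2$ is purely imaginary. The forcing is handled by integration by parts and Young's inequality:
\[
|\lan -ikg_1-\pa_yg_2,\theta\ran|\le \tfrac{\nu}{2}\|(\pa_y,k)\theta\|_{L^2}^2+C\nu^{-1}\|(g_1,g_2)\|_{L^2}^2 .
\]
This yields
\[
\|\theta\|_{L^\infty L^2}^2+\nu\|(\pa_y,k)\theta\|_{L^2L^2}^2\lesssim \|\theta^{\mathrm{in}}\|_{L^2}^2+\nu^{-1}\|(g_1,g_2)\|^2_{L^2L^2}.
\]
With the weight $e^{\epsilon_0\nu^{\frac13}t}$ we get an extra term $\epsilon_0\nu^{\frac13}\|\theta\|^2$ on the right. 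It is absorbed once the $\nu^{\frac13}|k|^{\frac23}$ bound from Step 2 is available and $\epsilon_0$ is small, using $|k|\ge1$. This step covers the regime $\nu|k|^2\ge\nu^{\frac13}|k|^{\frac23}$, i.e.\ $\nu k^2\gtrsim1$, where the $\min$ and $\max$ in the statement reduce to $\nu^{-1}$ and $\nu|k|^2$.

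\textbf{Step 2 (enhanced dissipation for $\nu k^2\lesssim1$).} For $\theta$ we use the resolvent estimates valid for monotone $V$:
\[
\nu^{\frac13}|k|^{\frac23}\|w\|_{L^2}\lesssim\|F\|_{L^2},\qquad \nu^{\frac23}|k|^{\frac13}\|w\|_{L^2}\lesssim\|F\|_{H^{-1}_k},
\]
where $w$ solves $-\nu(\pa_y^2-k^2)w+ik(V-\la)w-\epsilon\nu^{\frac13}w=F$ with $w(0)=w(1)=0$. These follow from \eqref{esti: wL2} and \eqref{esti: wH-1} with the $V''\phi$ term removed; the proofs in \cite{Chen-Li-Wei-Zhang,Masmoudi-Zhai-Zhao} simplify in that case. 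We first treat the inhomogeneous part with zero data. Taking the Fourier transform in $t$ of $e^{\epsilon\nu^{\frac13}t}\theta$ and applying Plancherel gives
\[
\nu^{\frac13}|k|^{\frac23}\|\theta\|_{L^2L^2}^2\lesssim \nu^{-\frac13}|k|^{-\frac23}\|kg_1\|^2+\nu^{-1}\|g_2\|^2 .
\]
For the $g_1$ term we interpolate between the $L^2$ resolvent bound applied to $F=-ikg_1$, which gives $\nu^{-\frac13}|k|^{\frac43}$, and the $H^{-1}_k$ bound with $\|kg_1\|_{H^{-1}_k}\le\|g_1\|$, which gives $\nu^{-1}$. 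Taking the better of the two produces the $\min\{\nu^{-\frac13}|k|^{\frac43},\nu^{-1}\}$ factor. The homogeneous part is handled by the semigroup decay $\|e^{-tL}\|\lesssim e^{-c\nu^{\frac13}|k|^{\frac23}t}$, which follows from the resolvent bound via Gearhart–Prüss (or Wei's pseudospectral bound). Integrating in time gives $\nu^{\frac13}|k|^{\frac23}\|e^{\epsilon\nu^{\frac13}t}\theta_H\|^2_{L^2L^2}\lesssim\|\theta^{\mathrm{in}}\|^2$ for $\epsilon$ small.

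\textbf{Step 3 ($L^\infty L^2$ bound with the sharper $g_1$ weight).} We return to the energy identity, but now bound the $g_1$ term by
\[
|\lan kg_1,\theta\ran|\le \delta\nu^{\frac13}|k|^{\frac23}\|\theta\|^2+C\delta^{-1}\nu^{-\frac13}|k|^{\frac43}\|g_1\|^2 .
\]
The first term is controlled by Step 2. This gives the stated $L^\infty L^2$ bound with the $\min$ weight.

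\textbf{Main obstacle.} The delicate point is the resolvent estimate at the boundary. The Dirichlet layer of width $(\nu/|k|)^{\frac13}$ must be shown not to degrade the $\nu^{\frac13}|k|^{\frac23}$ rate when $\la$ is near $V(0)$ or $V(1)$. I would handle this as in \cite{Masmoudi-Zhai-Zhao}: test by $w$, then by $\chi(V-\la)^{-1}$-type multipliers away from the critical layer, and use $V'\ge c_0$. The difficulty is only in the bookkeeping of that argument, not in a new idea.
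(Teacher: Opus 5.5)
Your proposal is correct and follows essentially the route the paper intends. The paper defers to \cite{Masmoudi-Zhai-Zhao}, whose argument, like the proof of Lemma~\ref{lemma: space-time froze} here, splits the solution into two parts. The inhomogeneous part is controlled by the Dirichlet resolvent estimates and Plancherel, with $-ikg_1$ treated either as an $L^2$ or as an $H^{-1}_k$ forcing to produce the $\min$ weight. The homogeneous part is controlled by semigroup decay, and the energy identity supplies the $L^\infty L^2$ bound. Your aside about the frozen-time argument is beside the point, since $V$ is time-independent in this lemma; that argument is only needed later, in the Proposition.
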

With Lemmas \ref{lemma: space-time froze} and \ref{space-time thete} at hand, we can obtain the following proposition.
 \begin{proposition}
Let $(\om,\theta)$ be the solution to \eqref{equ: om}. Then
there exist constants $\nu_0>0$ and $\epsilon_0>0$ such that, for $\nu\in(0,\nu_0]$, $\epsilon\in[0,\epsilon_{0})$, it holds that
\begin{align}\label{equ:low frequency}
&|k|^2\|e^{\epsilon \nu^{\frac13}t}u\|^2_{L^2 L^2}+\nu^{\frac12} |k|\|e^{\epsilon \nu^{\frac13}t}\om\|^2_{L^2 L^2}\\
&+\|e^{\epsilon \nu^{\frac13}t} \sqrt{1-(2y-1)^2}\om\|^2_{L^\infty L^2}+|k|\|e^{\epsilon \nu^{\frac13}t} u\|^2_{L^\infty L^\infty} \notag\\
\lesssim &\|\om^{\mathrm{in}}_{k}\|^2_{L^2}+\nu^{-1}\|e^{\epsilon \nu^{\frac13}t}(f_1,f_2)\|^2_{L^2 L^2} +\min\{\nu^{-\frac13}|k|^{-\frac23},\nu^{-1}|k|^{-2}\}\|e^{\epsilon \nu^{\frac13}t}f_3\|^2_{L^2 L^2},\notag
\end{align}
and
\begin{align}\label{esti: theta}
&\|e^{\epsilon \nu^{\frac13}t}\theta\|^2_{L^\infty L^2}+\max\{\nu^{\frac13}|k|^{\frac23},\nu |k|^2\}\|e^{\epsilon \nu^{\frac13}t}\theta\|^2_{L^2 L^2}\\
\lesssim &\|\theta^{\mathrm{in}}_{k}\|^2_{L^2}+\min\{\nu^{-\frac13}|k|^{\frac43},\nu^{-1}\}\|e^{\epsilon \nu^{\frac13}t}g_1\|^2_{L^2 L^2}+\nu^{-1}\|e^{\epsilon \nu^{\frac13}t}g_2\|^2_{L^2 L^2}.\notag
\end{align}
\end{proposition}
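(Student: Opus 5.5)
The plan is to deduce the proposition from the frozen-coefficient estimates in Lemma \ref{lemma: space-time froze} and Lemma \ref{space-time thete} by the frozen-time technique of \cite{Li-Shen-Zhang}. I would also split into two frequency regimes, $\nu k^2\le \|U'\|_{L^\infty}$ and $\nu k^2>\|U'\|_{L^\infty}$; this split is what produces the $\min$/$\max$ in the weights.

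\textbf{Moderate frequencies, $\nu k^2\le \|\pa_yU\|_{L^\infty}$.} The heat equation \eqref{equ: U} together with \eqref{equ:condition,shear,flow,M} shows that $U(t)$ keeps the properties in \eqref{cond V} uniformly in time. Indeed, $\pa_yU$ and $\pa_y^2U$ solve heat equations; since $\pa_y^2U^{\mathrm{in}}$ vanishes at the endpoints, $\pa_y^2U$ keeps its sign and has zero Dirichlet data. The maximum principle then gives $\pa_yU\ge c_0$, and energy estimates give uniform $H^4$ bounds. In particular $\|\pa_t U\|_{H^2}=\nu\|\pa_y^4 U\|\lesssim \nu$, so $U$ varies slowly on time scales $\ll \nu^{-1}$.

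I would partition $[0,\infty)$ into intervals $I_j=[j T, (j+1)T]$ with $T=\delta\nu^{-1/3}$ (or $T\sim\nu^{-1/2}$, chosen so that $\nu T\cdot |k|\lesssim$ a small quantity relative to the dissipation). On each $I_j$, freeze $V=U(jT,\cdot)$ and rewrite the $\om$ equation as \eqref{equ: om const} with $f_4$-type error
$$ik\big(U(t)-V\big)\om - ik\big(\pa_y^2U(t)-V''\big)\psi,$$
whose coefficients are $O(\nu |t-jT|)$ in $H^2$. These errors can be placed into $f_1$ through $ik(U-V)\om = ik\,\mathrm{div}$-type rewrites, or kept as $f_3$. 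Their contribution, $\nu^{-1}\|e^{\epsilon\nu^{1/3}t}\nu T k\,\om\|^2_{L^2L^2}$ or similar, must then be absorbed into $\nu^{1/2}|k|\|e^{\epsilon\nu^{1/3}t}\om\|^2_{L^2L^2}$ on the left. To avoid the localization in time, one could instead use cutoffs $\chi_j(t)$ and apply Lemma \ref{lemma: space-time froze} to $\chi_j\om$; this generates $\chi_j'\om$ terms, placed in $f_3$ with size $T^{-1}$. Summing in $j$ with the exponential weights and absorbing requires $T^{-2}\nu^{-1/3}|k|^{-2/3}\ll \nu^{1/2}|k|$, and choosing $T$ appropriately balances the two error types for small $\nu$. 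The initial term $E^{\mathrm{in}}$ gets replaced by $\|\om^{\mathrm{in}}_k\|^2_{L^2}$: I would smooth the initial layer by running the equation on a short interval with a plain energy estimate, or use the homogeneous bound from \cite[Lemma 3.5]{Li-Shen-Zhang} directly.

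\textbf{High frequencies, $\nu k^2>\|\pa_yU\|_{L^\infty}$.} In this regime the dissipation $\nu k^2$ dominates $|k|\|U'\|$, and a direct energy estimate suffices. Test the $\om$ equation with $\om$ and use $|\lan ik\pa_y^2U\psi,\om\ran|\lesssim |k|\|u\|\|\om\|/|k|$. This gives $\nu k^2\|\om\|^2_{L^2L^2}$ control, and bounds the $f_3$ contribution by $\nu^{-1}k^{-2}\|f_3\|^2$, which explains the $\min$. The weight $\sqrt{1-(2y-1)^2}$ and the $L^\infty$ bound on $u$ follow from $\|u\|_{L^\infty}^2\lesssim \|u\|\|\om\|$ and $|k|$-weighted interpolation.

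\textbf{Temperature.} For $\theta$ the estimate \eqref{esti: theta} is a transport–diffusion estimate with no nonlocal term. I would apply Lemma \ref{space-time thete} with the same freezing argument, where the error $ik(U-V)\theta$ is placed in $g_1$. Alternatively, I would note that the proof in \cite{Masmoudi-Zhai-Zhao} only uses monotonicity of $U(t)$ uniformly in $t$ together with $\|\pa_tU\|\lesssim\nu$, a commutator that can be absorbed.

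The main obstacle I expect is the absorption of the freezing error in the vorticity estimate. The nonlocal term $ik(\pa_y^2U-V'')\psi$ and the commutator with the time cutoffs must be bounded by a small multiple of the left side of \eqref{equ:low frequency}. This is uniform in $k$ only if $T$ is tuned between $\nu^{-1/3}$ and $\nu^{-1}$, with the smallness coming from $\nu_0$.
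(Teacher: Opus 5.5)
Your high-frequency step for the vorticity does not work. You test the $\om$-equation with $\om$, but under the non-slip condition the vorticity has no boundary condition (only $\psi=\pa_y\psi=0$ at $y=0,1$). Hence
\[
\operatorname{Re}\langle -\nu\pa_y^2\om,\om\rangle=\nu\|\pa_y\om\|_{L^2}^2-\nu\operatorname{Re}\big[\pa_y\om\,\bar\om\big]_{y=0}^{y=1},
\]
and this boundary flux is neither signed nor controlled. It is exactly the boundary-layer obstruction that the Navier-slip/corrector decomposition of Section 2 is built to handle. What does close at high frequency is the velocity-level identity. Pairing with $-\bar\psi$ kills all boundary terms, the $U''\psi$ term is purely imaginary, and
\[
\tfrac12\tfrac{d}{dt}\|u\|_{L^2}^2+\nu\|\om\|_{L^2}^2=-\operatorname{Re}\Big(ik\int_0^1 U'\,\pa_y\psi\,\bar\psi\,dy\Big)+(\text{forcing}).
\]
For $\nu k^2\ge\|U'\|_{L^\infty}$ this yields the two $L^2L^2$ terms of \eqref{equ:low frequency}, with the $\nu^{-1}|k|^{-2}$ weight on $f_3$. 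It does not yield $\|\sqrt{1-(2y-1)^2}\om\|_{L^\infty L^2}$ or $|k|\|u\|^2_{L^\infty L^\infty}$. Your interpolation $\|u\|^2_{L^\infty}\lesssim\|u\|\|\om\|$ presupposes an unweighted $L^\infty_tL^2_y$ bound on $\om$ that nothing in your argument provides. These $L^\infty_t$ pieces need a genuine boundary-layer analysis; the paper imports all of \eqref{equ:low frequency} from \cite{Li-Shen-Zhang}. The same objection applies to your ``plain energy estimate'' on an initial layer. Parabolic smoothing from $\|\om^{\mathrm{in}}_k\|_{L^2}$ to $|k|^{-1}\|\pa_y\om\|_{L^2}$ would in any case cost negative powers of $\nu$. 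Note that the nonlinear argument only uses the version with $E^{\mathrm{in}}$ on the right (see \eqref{esti: Ek}).

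For $\nu k^2\lesssim 1$, localizing with time cutoffs is a legitimate alternative to the paper's route, but your choice of $T$ is off. The paper (written out for $\theta$) uses $t_j=j\nu^{-1/3}$ and a Duhamel-type splitting $\theta=\sum_j\theta_j$. Each $\theta_j$ evolves under the frozen profile $U(t_{j+1})$ on all of $[t_j,\infty)$ and is forced only on $I_j$. This creates no commutator, the freezing error $G_j$ is $O(\nu^{2/3}(j-j'+1))$, and the decay $e^{-\epsilon_0(j-j')}$ closes the discrete summation \eqref{esti: sumDj}. With cutoffs, the commutator $\chi_j'\om$ in the $f_3$ slot is absorbable only if $\nu^{-1/3}|k|^{-2/3}T^{-2}\ll\nu^{1/2}|k|$, i.e.\ $T\gg\nu^{-5/12}$ at $|k|=1$, so $T=\delta\nu^{-1/3}$ fails. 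The freezing error $ik(U-V)\om$ must also go in the $f_3$ slot; in the $f_1$ slot it would need $T\ll\nu^{-1/4}$, which is incompatible. In the $f_3$ slot it needs $\nu^{7/6}T^2|k|^{1/3}\ll 1$, i.e.\ $T\ll\nu^{-1/2}$ since $|k|\lesssim\nu^{-1/2}$. So the workable window is $\nu^{-5/12}\ll T\ll\nu^{-1/2}$ for $\om$, and $\nu^{-1/3}\ll T\ll\nu^{-1/2}$ for $\theta$, not ``between $\nu^{-1/3}$ and $\nu^{-1}$''. With such $T$ your bookkeeping is simpler than the paper's, at the cost of relying on small powers of $\nu$ rather than working on the natural enhanced-dissipation time scale.
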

\begin{proof}
We can adopt the same proof process as \cite[Propositions 3.2 and 3.7]{Li-Shen-Zhang} to conclude \eqref{equ:low frequency}. For the sake of completeness, we present the proof of \eqref{esti: theta}.

For $\nu k^2\geq 1$, we directly test the second equation of \eqref{equ: om} with $\theta$ to obtain
\begin{align*}
\frac12\frac{d}{dt}\|\theta\|^2_{L^2}+\nu\|(\pa_y,k)\theta\|^2_{L^2}
=&\operatorname{Re}(\lan i k g_1,\theta\ran-\lan f_2,\pa_y \theta\ran)\\
\leq &\frac12\nu\|(\pa_y,k)\theta\|^2_{L^2}+C\nu^{-1}\lrs{\|g_1\|^2_{L^2}+\|g_2\|^2_{L^2}}.
\end{align*}
Thanks to
$$\frac{d}{dt}\|e^{\epsilon \nu^{\frac13}t} \theta\|^2_{L^2}=e^{\epsilon \nu^{\frac13}t}\frac{d}{dt}\| \theta\|^2_{L^2}+\epsilon \nu^{\frac13}\|e^{\epsilon \nu^{\frac13}t} \theta\|^2_{L^2},$$
we have
\begin{align*}
\frac{d}{dt}\|e^{\epsilon \nu^{\frac13}t} \theta\|^2_{L^2}+\nu\|e^{\epsilon \nu^{\frac13}t}(\pa_y,k)\theta\|^2_{L^2}\leq C\nu^{-1}\|e^{\epsilon \nu^{\frac13}t}(g_1,g_2)\|^2_{L^2}+
\epsilon \nu^{\frac13}\|e^{\epsilon \nu^{\frac13}t} \theta\|^2_{L^2},
\end{align*}
which implies
\begin{align}\label{nk2geq1hteta}
\|e^{\epsilon \nu^{\frac13}t} \theta\|^2_{L^\infty L^2}+\nu  \|e^{\epsilon \nu^{\frac13}t}(\pa_y,k)\theta\|^2_{L^2L^2}\lesssim \| \theta^{\mathrm{in}}\|^2_{L^2}+\nu^{-1}\|e^{\epsilon \nu^{\frac13}t}(g_1, g_2)\|^2_{L^2L^2}.
\end{align}

For $\nu k^2\leq 1$, let
$$ t_{j}=j\nu^{-\frac{1}{3}}.$$

Without loss of generality, we assume $T=t_{N+1}$ and decompose the solution
$\theta$ to the second equation of \eqref{equ: om} into $\theta=\sum_{0\leq j\leq N} \theta_{j}$, where  the component $\theta_{j}(j\geq 0)$ satisfies
\begin{equation}
\left\{
\begin{aligned}
&\pa_{t}\theta_{j}-\nu (\pa^2_y-k^2)\theta_{j}+ik U(t_{j+1},y)\theta_{j}=[ik g_{1}+\pa_{y} g_2+G_{j}]\chi_{I_{j}},\\
&\theta_{j}(t,k,0)=\theta_{j}(t,k,1)=0,\\
&\theta_{j}(t_{j},y)=1_{\lr{j=0}}\theta^{\mathrm{in}}(y),
\end{aligned}
\right.
\end{equation}
with $I_{j}=:[t_{j}, t_{j+1})$ and
$G_{j}:=\sum_{j'=0}^{j}ik (U(t_{j'+1},y)-U(t,y))\theta_{j'}.$

Let
$$\|f\|_{Y(a,b)}:=\nu^{\frac16}|k|^{\frac13}\|f\|_{L^2(a, b;L^2)}, \qquad \|f\|_{X(a,b)}:=\|f\|_{L^\infty(a,b; L^2)}+\|f\|_{Y(a,b)}.$$

For $j=0$, noting that $\epsilon \nu^{\frac13}t\leq 1$ for $t\in \mathrm{I}_{0}$, we use Lemma \ref{space-time thete} to deduce
\begin{align}\label{esti: omjwithj=0}
&\|e^{\epsilon_{0} \nu^{\frac13}t}\theta_0\|^2_{X(0,T)}\\
\lesssim &\|\theta^{\mathrm{in}}\|^2_{L^2}+\nu^{-\frac13}|k|^{-\frac23}(\|e^{\epsilon_{0} \nu^{\frac13}t}g_{1}\|^2_{L^2(\mathrm{I}_{0}; L^2)}+\|e^{\epsilon_{0} \nu^{\frac13}t}G_{0}\|^2_{L^2(\mathrm{I}_{0}; L^2)})+\nu^{-1}\|e^{\epsilon_{0} \nu^{\frac13}t}g_2\|^2_{L^2(\mathrm{I}_{0}; L^2)}\notag\\
\lesssim &\|\theta^{\mathrm{in}}\|^2_{L^2}+\nu^{-\frac13}|k|^{-\frac23}\|g_1\|^2_{L^2(\mathrm{I}_{0}; L^2)}+ \nu^{-1}\|g_2\|^2_{L^2(\mathrm{I}_{0}; L^2)}+\nu^{-\frac13}|k|^{-\frac23}\|G_{0}\|^2_{L^2(\mathrm{I}_{0}; L^2)}.\notag
\end{align}

For $1 \leq j\leq N$, similarly, we have
\begin{align}\label{esti: omjwithjgeq1}
\|e^{\epsilon_{0} \nu^{\frac13}t}\theta_j\|^2_{X(t_j,T)}
\lesssim \nu^{-\frac13}|k|^{-\frac23}\|g_1\|^2_{L^2(\mathrm{I}_{j}; L^2)}+\nu^{-1}\|g_2\|^2_{L^2(\mathrm{I}_{j}; L^2)}+\nu^{-\frac13}|k|^{-\frac23}\|G_{j}\|^2_{L^2(\mathrm{I}_{j}; L^2)}.
\end{align}

It follows from \eqref{esti: Ut-Us Linfty} that
\begin{align*}
\n{U(t_{j'+1},y)-U(t,y)}_{L^{\infty}(\mathrm{I}_{j}; L^{\infty})}\lesssim \nu \|t-t_{j'+1}\|_{L^\infty(\mathrm{I}_{j})}\|U^{in}\|_{H^4}\lesssim \nu^{\frac23}(j-j'+1),
\end{align*}
which, together with the definition of $G_{j}$, implies
\begin{align}\label{esti:F1+F2}
\nu^{-\frac16}|k|^{-\frac13}\|G_{j}\|_{L^2(\mathrm{I}_{j}; L^2)}
\leq &\nu^{-\frac16}|k|^{\frac23}\sum_{j'=0}^{j}\|U(t_{j'+1},y)-U(t,y)\|_{L^{\infty}(\mathrm{I}_{j}; L^{\infty})}\n{\theta_{j'}}_{L^2(\mathrm{I}_{j};L^2)}\\
\lesssim & \nu^{\frac{1}{2}}|k|^{\frac23}\sum_{j'=0}^{j}(j-j'+1)\|\theta_{j'}\|_{L^2(\mathrm{I}_{j}; L^2)}.\notag
\end{align}

Due to $\nu^{\frac13}(t-t_{j'})\geq j-j'$ for $t\in \mathrm{I}_{j}$, we have
\begin{align*}
\|\theta_{j'}\|_{L^2(\mathrm{I}_{j}; L^2)}
\lesssim  e^{-\epsilon_{0} (j-j')}\n{e^{\epsilon_{0} \nu^{\frac{1}{3}}(t-t_{j'})} \theta_{j'}}_{L^2(\mathrm{I}_{j};L^2)}.
\end{align*}
Inserting it into \eqref{esti:F1+F2} and using the Cauchy-Schwarz inequality, we obtain
\begin{align}\label{esti:G1+G2}
\nu^{-\frac13}|k|^{-\frac23}\|G_{j}\|^2_{L^2(\mathrm{I}_{j}; L^2)}
 \lesssim & \nu^{\frac{1}{3}}  \lrs{\sum_{j'=0}^{j}(j-j'+1)  e^{-\epsilon_{0} (j-j') }\nu^{\frac13}|k|^{\frac23}\|e^{\epsilon_{0} \nu^{\frac{1}{3}}(t-t_{j'})}\theta_{j'}\|_{L^2(\mathrm{I}_{j};L^2)}}^2\\
 \lesssim&  \nu^{\frac{1}{3}}  \sum_{j'=0}^{j} e^{-\epsilon_{0} (j-j') }\|e^{\epsilon_{0} \nu^{\frac{1}{3}}(t-t_{j'})}\theta_{j'}\|_{Y(\mathrm{I}_{j})}^2. \notag
\end{align}

Substituting \eqref{esti:G1+G2} into \eqref{esti: omjwithj=0} and \eqref{esti: omjwithjgeq1}, we get
\begin{align}\label{esti: theta0}
\|e^{\epsilon_{0} \nu^{\frac13}t}\theta_{0}\|^2_{X(0,T)}\lesssim & \|\theta^{\mathrm{in}}\|^2_{L^2}+\nu^{-\frac13}|k|^{-\frac23}\|g_1\|^2_{L^2(\mathrm{I}_{0}; L^2)}\\
&+ \nu^{-1}\|g_2\|^2_{L^2(\mathrm{I}_{0}; L^2)}+\nu^{\frac{1}{3}}\|e^{\epsilon_{0} \nu^{\frac{1}{3}}t}\theta_{0}\|^2_{Y(\mathrm{I}_{0})}, \notag
\end{align}
and
\begin{align}\label{esti: theta1}
\|e^{\epsilon_{0} \nu^{\frac13}(t-t_{j})}\theta_{j}\|^2_{X(t_j,T)}\lesssim & \nu^{-\frac13}|k|^{-\frac23}\|g_1\|^2_{L^2(\mathrm{I}_{j}; L^2)}+\nu^{-1}\|g_2\|^2_{L^2(\mathrm{I}_{j}; L^2)}\\
&+\nu^{\frac{1}{3}}\sum_{j'=0}^{j}  e^{-\epsilon_{0}(j-j')} \|e^{\epsilon_{0} \nu^{\frac{1}{3}}(t-t_{j'})}\theta_{j'}\|^2_{Y(\mathrm{I}_{j})}. \notag
\end{align}

Let
\begin{align*}
H_{j}=\sum_{j'=0}^{j}  e^{-\epsilon_{0} (j-j')} \|e^{\epsilon_{0} \nu^{\frac{1}{3}}(t-t_{j'})}\theta_{j'}\|^2_{Y(\mathrm{I}_{j})}.
\end{align*}
It follows from \eqref{esti: theta0} and \eqref{esti: theta1} that
\begin{align*}
H_{j}
\lesssim
& e^{-\epsilon_{0} j} \|\theta^{\mathrm{in}}\|^2_{L^2}+ \sum_{j'=0}^{j}  e^{-\epsilon_{0} (j-j')} \Big[\nu^{-\frac13}|k|^{-\frac23}\|g_1\|^2_{L^2(\mathrm{I}_{j}; L^2)}+
 \nu^{-1}\|g_2\|^2_{L^2(\mathrm{I}_{j}; L^2)}+\nu^{\frac{1}{3}}H_{j'}\Big].
\end{align*}

Choosing $\epsilon\leq \frac{\epsilon_{0}}{4}$ and noticing that
$$e^{2\epsilon j}e^{-\epsilon_0(j-j')}=e^{-(\epsilon_0-2\epsilon)(j-j')}e^{2\epsilon j'},$$
we then apply Minkowski's inequality to obtain
\begin{align*}
\sum_{j=0}^{N}e^{2\epsilon j}H_{j}
\lesssim&\|\theta^{\mathrm{in}}\|^2_{L^2}+ \sum_{j=0}^{N}\sum_{j'=0}^{j}  e^{-(\epsilon_{0}-2\epsilon )(j-j')}e^{2\epsilon j' } \Big[ \nu^{-\frac13}|k|^{-\frac23}\|g_1\|^2_{L^2(\mathrm{I}_{j'}; L^2)}\\
&\qquad \qquad + \nu^{-1}\|g_2\|^2_{L^2(\mathrm{I}_{j'}; L^2)}+\nu^{\frac{1}{3}}H_{j'}\Big]\\
\lesssim& \|\theta^{\mathrm{in}}\|^2_{L^2}+\sum_{j'=0}^{N} e^{2\epsilon j' } \Big[\nu^{-\frac13}|k|^{-\frac23}\|g_1\|^2_{L^2(\mathrm{I}_{j'}; L^2)}+ \nu^{-1}\|g_2\|^2_{L^2(\mathrm{I}_{j'}; L^2)}+\nu^{\frac{1}{3}}H_{j'}\Big]\\
\lesssim & \|\theta^{\mathrm{in}}\|^2_{L^2}+\nu^{-\frac13}|k|^{-\frac23}\|g_1\|^2_{L^2( 0,T; L^2)}+ \nu^{-1}\|g_2\|^2_{L^2(0,T ; L^2)}+\nu^{\frac13} \sum_{j=0}^{N} e^{2\epsilon  j }H_{j},
\end{align*}
which implies that
\begin{align}\label{esti: sumDj}
\sum_{j=0}^{N}e^{2\epsilon  j}H_{j}
 \lesssim \|\theta^{\mathrm{in}}\|^2_{L^2}+\nu^{-\frac13}|k|^{-\frac23}\|g_1\|^2_{L^2( 0,T; L^2)}+ \nu^{-1}\|g_2\|^2_{L^2(0,T ; L^2)}.
\end{align}

Now, we go back to the estimate of $\|e^{\epsilon_{0} \nu^{\frac13}t}\theta\|_{X(0,T)}$.

For $\|e^{\epsilon_{0} \nu^{\frac13}t}\theta\|_{L^\infty(0,T;L^2)}$,  due to $(0,T)=\sum_{j=0}^{N}\mathrm{I}_{j}$ and $\theta=\sum_{j'=0}^{j}\theta_{j'}$ for $t\in \mathrm{I}_{j}$, we obtain
\begin{align*}
\|e^{\epsilon \nu^{\frac13} t}\theta\|^2_{L^\infty(0,T;L^2)}= \sup_{0\leq j\leq N}\|e^{\epsilon \nu^{\frac13} t}\theta\|^2_{L^\infty(\mathrm{I}_{j};L^2)}\leq \sup_{0\leq j\leq N} \sum_{j'=0}^{j}\|e^{\epsilon \nu^{\frac13} t}\theta_{j'}\|^2_{L^\infty(\mathrm{I}_{j};L^2)}.
\end{align*}
Inserting \eqref{esti: theta0} and \eqref{esti: theta1} into above estimate and using \eqref{esti: sumDj}, we deduce
\begin{align}
\|e^{\epsilon \nu^{\frac13} t}\theta\|^2_{L^\infty(0,T;L^2)}\lesssim &\|\theta^{\mathrm{in}}\|^2_{L^2}+ \sup_{0\leq j\leq N} \sum_{j'=0}^{j}\nu^{-\frac13}|k|^{-\frac23}\|g_1\|^2_{L^2(\mathrm{I}_{j'}; L^2)} \label{nk2leq1thetaLinfty}\\
&+  \sup_{0\leq j\leq N} \sum_{j'=0}^{j} \nu^{-1}\|g_2\|^2_{L^2(\mathrm{I}_{j'}; L^2)}+\nu^{\frac{1}{3}}\sup_{0\leq j\leq N}\sum_{j'=0}^{j}H_{j'}\notag\\
\lesssim & \|\theta^{\mathrm{in}}\|^2_{L^2}+ \nu^{-\frac13}|k|^{-\frac23}\|g_1\|^2_{L^2(0,T; L^2)}+ \nu^{-1}\|g_2\|^2_{L^2(0,T; L^2)}. \notag
\end{align}

For $\|e^{\epsilon_{0} \nu^{\frac13}t}\theta\|_{L^2(0,T;L^2)}$, similarly, we have
\begin{align}\label{esti:omX0T}
\|e^{\epsilon \nu^{\frac13} t}\theta\|^2_{L^2(0,T;L^2)}=&\sum_{j=0}^{N}\int_{\mathrm{I}_{j}}\|e^{\epsilon \nu^{\frac13}t}\theta\|^2_{L^2} dt
\lesssim\sum_{j=0}^{N}e^{2\epsilon j}\int_{\mathrm{I}_{j}}\bbn{\sum_{j'=0}^{j}\theta_{j'}}^2_{L^2} dt.
\end{align}
Using H\"older's inequality and the fact that $\nu^{\frac13}(t-t_{j'})\geq j-j'$ for $t\in \mathrm{I}_{j}$, we obtain
\begin{align}\label{equ:Ij,sum,j,Y}
\int_{\mathrm{I}_{j}}\lrs{\sum_{j'=0}^{j}\n{\theta_{j'}}_{L^2}}^2 dt\leq&\int_{\mathrm{I}_{j}}\sum_{j'=0}^{j}e^{-\epsilon_{0} \nu^{\frac13}(t-t_{j'})}\n{e^{\epsilon_{0} \nu^{\frac13}(t-t_{j'})}\theta_{j'}}^2_{L^2}\sum_{j'=0}^{j}e^{-\epsilon_{0} \nu^{\frac13}(t-t_{j'})} dt\\
\lesssim& \int_{\mathrm{I}_{j}}\sum_{j'=0}^{j} e^{-\epsilon_{0} (j-j')} \n{e^{\epsilon_{0} \nu^{\frac13}(t-t_{j'})}\theta_{j'}}^2_{L^2} dt\lesssim \nu^{-\frac13}|k|^{-\frac23} H_{j}.\notag
\end{align}
Putting together \eqref{esti: sumDj}, \eqref{esti:omX0T}, and \eqref{equ:Ij,sum,j,Y}, we arrive at
\begin{align*}
\nu^{\frac13}|k|^{\frac23}\|e^{\epsilon \nu^{\frac13} t}\theta\|^2_{L^2(0,T;L^2)}\lesssim \|\theta^{\mathrm{in}}\|^2_{L^2}+\nu^{-\frac13}|k|^{-\frac23}\|e^{\epsilon \nu^{\frac13} t}g_1\|^2_{L^2( 0,T; L^2)}+ \nu^{-1}\|e^{\epsilon \nu^{\frac13} t}g_2\|^2_{L^2(0,T ; L^2)}.
\end{align*}
This estimate together with \eqref{nk2geq1hteta} and \eqref{nk2leq1thetaLinfty} completes the proof of \eqref{esti: theta}.
\end{proof}

\section{Nonlinear stability}\label{nonlinear tran thre}
In this section, we will use the space-time estimates obtained in Section \ref{sec: space-time estimate} to derive the transition threshold of the monotone shear flow, i.e., the proof of Theorem \ref{Th: tran thre}.

Recall
\begin{equation*}
E_k=\left\{
\begin{aligned}
&\|\om_0\|_{L^\infty L^2}, \quad k=0,\\
&|k|\| e^{\epsilon_0 \nu^{\frac13}t} u_k\|_{L^2L^2}+|k|^{\frac12}\|e^{\epsilon_0 \nu^{\frac13}t} u_k\|_{L^\infty L^\infty}\\
&+\|e^{\epsilon_0 \nu^{\frac13}t} \sqrt{1-(2y-1)^2} \om_k\|^2_{L^\infty L^2}+\nu^{\frac14}|k|^{\frac12}\|e^{\epsilon_0 \nu^{\frac13}t} \om_k\|_{L^2L^2}, \quad k\neq 0.
\end{aligned}
\right.
\end{equation*}
and
\begin{equation*}
G_k=\left\{
\begin{aligned}
&\|\theta_0\|_{L^\infty L^2}, \quad k=0,\\
&|k|^{\frac13}\|e^{\epsilon_0 \nu^{\frac13}t} \theta_k\|_{L^\infty L^2}+\nu^{\frac16}|k|^{\frac23}\|e^{\epsilon_0 \nu^{\frac13}t} \theta_k\|_{L^2L^2}, \quad k\neq 0,
\end{aligned}
\right.
\end{equation*}
\begin{proof}[Proof of Theorem \ref{Th: tran thre}]
The equation \eqref{equ: omli11} can be rewritten as
\begin{equation}\label{equ: omtheta}
\left\{
\begin{aligned}
&\pat\om-\nu\De \om+U\pa_x\om-\pa^2_y U\pa_x\psi=-\operatorname{curl}(u\cdot\na u)+\pa_x \theta,\\
&\pat \theta-\nu \De\theta+U\pa_x\theta=-u\cdot\na \theta.
\end{aligned}
\right.
\end{equation}
Taking the Fourier transform with respect to $x$-variable on the equation of \eqref{equ: omtheta}, we obtain
\begin{align*}
\pa_t \om_k-\nu (\pa^2_y-k^2) \om_k+ikU \om_k-ik\pa^2_y U\psi_k=&\pa_y(u\cdot\na u^{(1)})_k-ik(u\cdot\na u^{(2)})_k-ik \theta \\
:=&-\pa_y (f^{1,1}_k+f^{2,1}_k)-ik(f^{1,2}_k+f^{2,2}_k)-ik \theta ,\\
\pat \theta_{k}-\nu (\pa^2_y-k^2)\theta_{k}+ik U\theta_{k}:= &-ik g_{k}^{1}-\pa_y g_{k}^{2}.
\end{align*}
where
\begin{align*}
&f^{1,1}_k=i\sum_{l\in\Z} u^{(1)}_l(t,y)(k-l)u^{(1)}_{k-l}(t,y), \quad f^{1,2}_k=i\sum_{l\in\Z} u^{(1)}_l(t,y)(k-l)u^{(2)}_{k-l}(t,y),\\
&f^{2,1}_k=\sum_{l\in \Z} u^{(2)}_l(t,y) \pa_y u^{(1)}_{k-l}(t,y), \qquad\quad f^{2,2}_k=\sum_{l\in \Z} u^{(2)}_l(t,y) \pa_y u^{(2)}_{k-l}(t,y),\\
&g_{k}^{1}=\sum_{l\in\Z}u^{1}_{k-1}\theta_{l},\qquad \qquad \qquad g_{k}^{2}=\sum_{l\in\Z}  (u^{2}_{k-l}\theta_{l}).
\end{align*}

The conditions $\operatorname{div} u=0$ and $P_0(u^{(1)}\pa_x u^{(1)})=0,$ together with the equation \eqref{pertu} imply that $u^{(1)}_0$ satisfies
\begin{align}\label{new,equ:u01,def}
(\pa_t -\nu \pa^2_y) u^{(1)}_0(t,y)=-\sum_{l\in \Z\setminus \{0\}} u^{(2)}_l \pa_y u^{(1)}_{-l}(t,y):=-f_0^{2,1}(t,y).
\end{align}
Taking inner products with $\pa_y^2u_0^{(1)}$ and using the integration by parts, we have
\begin{align*}
\pa_t \|\pa_y u^{(1)}_0\|^2_{L^2}+\nu \|\pa^2_y u^{(1)}_0\|^2_{L^2}=\lan f_0^{2,1}, \pa^2_y u^{(1)}_0\ran \lesssim \nu^{-1}\|f_0^{2,1}\|^2_{L^2}+\frac12\nu\|\pa^2_y u^{(1)}_0\|^2_{L^2},
\end{align*}
which implies
\begin{align}\label{esti: E0}
E^2_0\lesssim \nu^{-1}\|f_0^{2,1}\|^2_{L^2L^2}+\|\om^{\mathrm{in}}_0\|^2_{L^2},
\end{align}
where we used the fact that $\pa_yu_0^{(1)}(t,y)=\om_0(t,y)$.

For $E_k$ with $k\neq 0$, it follows from space-time estimates \eqref{equ:low frequency} that
\begin{equation}\label{esti: Ek}
\begin{aligned}
E^2_{k}\lesssim& \nu^{-1} \|e^{\epsilon_0 \nu^{\frac13}t} f^{m,n}_k\|^2_{L^2 L^2} +\min\{\nu^{-\frac13}|k|^{\frac43}, \nu^{-1} \}\|\theta_{k}\|^2_{L^2L^2}\\
& +\|u^{\mathrm{in}}_k\|^2_{H^1_y}+|k|^{-2}\|\pa_y\om^{\mathrm{in}}_{k}\|^2_{L^2}, \qquad 1\leq m,n\leq 2.
\end{aligned}
\end{equation}

By H\"older's inequality, for $n=1,2$, we have
\begin{align}\label{esti: f1}
\|e^{\epsilon_0 \nu^{\frac13}t}f^{1,n}\|_{L^2L^2}\leq \sum_{l\in\Z}\|e^{\epsilon_0 \nu^{\frac13}t}u^{(1)}_l\|_{L^\infty L^\infty}\|(k-l)e^{\epsilon_0 \nu^{\frac13}t}u^{(n)}_{k-l}\|_{L^2L^2}\leq \sum_{l\in \Z} E_l E_{k-l}.
\end{align}

Using Hardy's inequality, $\pa_{y}u^{(2)}=-iku^{(1)}$ and \eqref{esti: weightpayu}, for $n=1,2$, we deduce
\begin{equation}
\begin{aligned}\label{esti: f2}
\|e^{\epsilon_0 \nu^{\frac13}t}f^{2,n}\|_{L^2L^2}
\leq & \sum_{l\in\Z} \Big\|\frac{e^{\epsilon_0 \nu^{\frac13}t}u^{(2)}_{l}}{\sqrt{1-(2y-1)^2}}\Big\|_{L^2 L^\infty}\|e^{\epsilon_0 \nu^{\frac13}t}\sqrt{1-(2y-1)^2} \pa_y u^{(n)}_{k-l}\|_{L^\infty L^2}\\
\lesssim &\sum_{l\in\Z} \|e^{\epsilon_0 \nu^{\frac13}t}\pa_y u^{(2)}_l\|_{L^2L^2}\|e^{\epsilon_0 \nu^{\frac13}t}\sqrt{1-(2y-1)^2}\om_{k-l}\|_{L^\infty L^2}\\
\lesssim & \sum_{l\in\Z}\|e^{\epsilon_0 \nu^{\frac13}t}l u^{(1)}_l\|_{L^2L^2}E_{k-l} \lesssim  \sum_{l\in\Z}E_{l}E_{k-l}.
\end{aligned}
\end{equation}

Inserting \eqref{esti: f1}, \eqref{esti: f2} into \eqref{esti: Ek}, and \eqref{esti: f2} into \eqref{esti: E0}, we arrive at
\begin{align}\label{esti: Ek1}
E_k\lesssim \nu^{-\frac12}\sum_{l\in\Z} E_{l}E_{k-l}+\nu^{-\frac13}G_{k}+\|u^{\mathrm{in}}_k\|_{H^1_y}+|k|^{-1}\|\pa_y\om^{\mathrm{in}}_{k}\|_{L^2}, \quad k\in\Z.
\end{align}

For $G_{0}$, due to $P_{0}(u^{1}_{l}\pa_{x}\theta_{-l})=0$, we have
\begin{align*}
\pa_t \theta_{0}-\nu \pa^2_y \theta_{0}= -\sum_{l\in \Z\setminus \{0\}} u^{(2)}_l \pa_y \theta_{-l}:=\pa_y g_{0}^{2}.
\end{align*}
Test the above equation with $\theta_{0}$ gives
\begin{align*}
\pa_t \|\theta_{0}\|^2_{L^2}+2\nu\|\pa_y \theta_{0}\|^2_{L^2}=2\lan \pa_y g_{0}^{2}, \theta_{0} \ran\leq C\nu^{-1}\|g_{0}^{2}\|^2_{L^2}+\frac12\nu\|\pa_y \theta_{0}\|^2_{L^2},
\end{align*}
which implies
\begin{align}\label{esti:H0}
G^2_{0}\leq \|\theta_{0}\|^2_{L^\infty L^2}+\nu \|\pa_y \theta_{0}\|^2_{L^2L^2}\leq \|\theta^{\mathrm{in}}_{0}\|_{L^2}+C\nu^{-1}\|g_{0}^{2}\|^2_{L^2L^2}.
\end{align}

For $G_k$ with $k\neq 0$, we use \eqref{esti: theta} to obtain
\begin{align}\label{esti: Hk}
G^2_{k}\lesssim& |k|^{\frac13}\|\theta^{\mathrm{in}}_k\|^2_{L^2_y}+\min\{\nu^{-\frac13}|k|^{2}, \nu^{-1} |k|^{\frac23}\}\|e^{\epsilon_0 \nu^{\frac13}t} g^1_k\|^2_{L^2L^2} +\nu^{-1} |k|^{\frac23} \|e^{\epsilon_0 \nu^{\frac13}t} g^2_k\|^2_{L^2L^2}.
\end{align}

For $g^1_k$, it is noticed that
\begin{align*}
\|e^{\epsilon_0 \nu^{\frac13}t} g^1_k\|_{L^2L^2}
\leq &\|e^{\epsilon_0 \nu^{\frac13}t} u^{1}_{k}\|_{L^2L^\infty}\|e^{\epsilon_0 \nu^{\frac13}t} \theta_{0}\|_{L^\infty L^2}+\|e^{\epsilon_0 \nu^{\frac13}t} u^{1}_{0}\|_{L^\infty L^\infty}\|e^{\epsilon_0 \nu^{\frac13}t} \theta_{k}\|_{L^2 L^2}\\
&+\sum_{l\in \Z\setminus \{0,k\}} \|e^{\epsilon_0 \nu^{\frac13}t} u^{1}_{k-l}\theta_{l}\|_{L^2L^2}.
\end{align*}

By the definition of $E_{k}$ and $G_{k}$, we have
\begin{align}
&\|e^{\epsilon_0 \nu^{\frac13}t} u^{1}_{k}\|_{L^2L^\infty}\|e^{\epsilon_0 \nu^{\frac13}t} \theta_{0}\|_{L^\infty L^2}\leq \|e^{\epsilon_0 \nu^{\frac13}t} u^{1}_{k}\|^{\frac12}_{L^2L^2}\|e^{\epsilon_0 \nu^{\frac13}t} \om_{k}\|^{\frac12}_{L^2L^2}G_{0}\leq \nu^{-\frac{1}{8}}|k|^{-\frac34}E_{k}G_{0}, \label{esti:EkH0}\\
&\|e^{\epsilon_0 \nu^{\frac13}t} u^{1}_{0}\|_{L^\infty L^\infty}\|e^{\epsilon_0 \nu^{\frac13}t} \theta_{k}\|_{L^2 L^2} \leq \|e^{\epsilon_0 \nu^{\frac13}t} \om_{0}\|_{L^\infty L^2}G_{k}\leq \nu^{-\frac{1}{6}} |k|^{-\frac23} E_{0}G_{k}.\label{esti:HkE0}
\end{align}

For $l\in \Z\setminus \{0,k\}$, we divide the summation into
\begin{align}\label{sum}
\sum_{l\in \Z\setminus \{0,k\}} \|e^{\epsilon_0 \nu^{\frac13}t} u^{1}_{k-l}\theta_{l}\|_{L^2L^2}
\leq \sum_{\substack{l\in \Z\setminus \{0,k\},\\ |k-l|\leq |k|/2} } \|e^{\epsilon_0 \nu^{\frac13}t} u^{1}_{k-l}\theta_{l}\|_{L^2L^2}+\sum_{\substack{l\in \Z\setminus \{0,k\},\\ |k-l|\geq |k|/2}} \|e^{\epsilon_0 \nu^{\frac13}t} u^{1}_{k-l}\theta_{l}\|_{L^2L^2}.
\end{align}
For $|k-l|\leq |k|/2$, we have $|l|^{-1}\leq 2|k|^{-1}$, which implies
\begin{align}\label{k-l<k}
\|e^{\epsilon_0 \nu^{\frac13}t} u^{1}_{k-l}\theta_{l}\|_{L^2L^2}\leq &\|e^{\epsilon_0 \nu^{\frac13}t} u^{1}_{k-l}\|_{L^\infty L^\infty}\|e^{\epsilon_0 \nu^{\frac13}t}\theta_{l}\|_{L^2L^2}\\
\leq &\nu^{-\frac16} |k-l|^{-\frac12}|l|^{-\frac23}E_{k-l}  G_{l}
\leq  \nu^{-\frac16} |k|^{-\frac23}E_{k-l}  G_{l}.\notag
\end{align}
For $|k-l|\geq |k|/2$, we have $|k-l|^{-1}\leq 2|k|^{-1}$. It follows from interpolation inequality that 
\begin{align}\label{k-l>k}
\|e^{\epsilon_0 \nu^{\frac13}t} u^{1}_{k-l}\theta_{l}\|_{L^2L^2}\leq &\|e^{\epsilon_0 \nu^{\frac13}t} u^{1}_{k-l}\|_{L^2L^\infty} \|e^{\epsilon_0 \nu^{\frac13}t}\theta_{l}\|_{L^\infty L^2}\\
\lesssim & \|e^{\epsilon_0 \nu^{\frac13}t} u^{1}_{k-l}\|^{\frac12}_{L^2L^2}\|e^{\epsilon_0 \nu^{\frac13}t} \om_{k-l}\|^{\frac12}_{L^2L^2}  |l|^{-\frac13} G_{l}\notag \\
\lesssim & \nu^{-\frac18}|k-l|^{-\frac34}|l|^{-\frac13}E_{k-l}G_{l}\lesssim \nu^{-\frac18}|k|^{-\frac34}E_{k-l}G_{l}.\notag
\end{align}

Inserting \eqref{k-l<k} and \eqref{k-l>k} into \eqref{sum}, we further obtain
\begin{align*}
&\min\{\nu^{-\frac16}|k|,\nu^{-\frac12}|k|^{\frac13}\}\sum_{l\in \Z\setminus \{0,k\}} \|e^{\epsilon_0 \nu^{\frac13}t} u^{1}_{k-l}\theta_{l}\|_{L^2L^2}\\
\lesssim & \sum_{l\in \Z\setminus \{0,k\}} \min\{(\nu^{-\frac13}|k|^{\frac13}+\nu^{-\frac{7}{24}}|k|^{\frac14} ),  (\nu^{-\frac23}|k|^{-\frac13}+\nu^{-\frac58}|k|^{-\frac{5}{12}})\} E_{k-l} G_{l} \\
\lesssim &\nu^{-\frac12}\sum_{l\in \Z\setminus \{0,k\}} E_{k-l}G_{l},
\end{align*}
which together with \eqref{esti:EkH0} and \eqref{esti:HkE0}, implies
\begin{align}\label{esti:g1}
\min\{\nu^{-\frac16}|k|, \nu^{-\frac12} |k|^{\frac13}\}\|e^{\epsilon_0 \nu^{\frac13}t} g^1_k\|_{L^2L^2}\lesssim \nu^{-\frac12}\sum_{l\in \Z} E_{k-l}G_{l}.
\end{align}

For $g^2_k$, noticing that $\pa_y u^2_{k-l}=-i(k-l) u^2_{k-l}$, we use Hardy's inequality to obtain
\begin{align*}
\|e^{\epsilon_0 \nu^{\frac13}t} g^2_k\|_{L^2L^2}\leq &\sum_{l\in\Z} \|e^{\epsilon_0 \nu^{\frac13}t}u^2_{k-l}\|_{L^2 L^\infty}\|e^{\epsilon_0 \nu^{\frac13}t}\theta_{l}\|_{L^\infty L^2}\\
\leq &\sum_{l\in \Z}\|e^{\epsilon_0 \nu^{\frac13}t}u^2_{k-l}\|^{\frac12}_{L^2L^2}\|e^{\epsilon_0 \nu^{\frac13}t}\pa_y u^2_{k-l}\|^{\frac12}_{L^2L^2} \|e^{\epsilon_0 \nu^{\frac13}t}\theta_{l}\|_{L^\infty L^2}\\
= &\sum_{l\in \Z}\|e^{\epsilon_0 \nu^{\frac13}t}u^2_{k-l}\|^{\frac12}_{L^2L^2}|k-l|^{\frac12}\|e^{\epsilon_0 \nu^{\frac13}t} u^1_{k-l}\|^{\frac12}_{L^2L^2} \|e^{\epsilon_0 \nu^{\frac13}t}\theta_{l}\|_{L^\infty L^2}\\
\leq & \sum_{l\in \Z\setminus \{0,k\}} |k-l|^{-\frac12}|l|^{-\frac13} E_{k-l}G_{l}+|k|^{-\frac12}E_{k}G_{0},
\end{align*}
which implies
\begin{align}\label{esti:g2}
\nu^{-\frac12}|k|^{\frac13}\|e^{\epsilon_0 \nu^{\frac13}t} g^2_k\|_{L^2L^2}\leq \nu^{-\frac12}\sum_{l\in \Z} E_{k-l}G_{l}.
\end{align}

Substituting \eqref{esti:g1} and \eqref{esti:g2} into \eqref{esti: Hk} and together with \eqref{esti:H0}, we deduce
\begin{align}\label{esti: Hk1}
H_k\lesssim \nu^{-\frac12}\sum_{l\in \Z}E_{k-l}  H_{l} +|k|^{\frac13}\|\theta^{\mathrm{in}}_{k}\|_{L^2}, \quad k\in \Z.
\end{align}

Summing up with $k$ in \eqref{esti: Ek1} and \eqref{esti: Hk1}, we further obtain
\begin{align*}
\sum_{k\in\Z} E_k\lesssim \nu^{-\frac12}\sum_{k\in\Z}\sum_{l\in \Z} E_{l}E_{k-l} +\nu^{-\frac13}\sum_{k\in \Z} G_{k} +\sum_{k\in\Z}(\|u^{\mathrm{in}}_k\|_{H^1_y}+|k|^{-1}\|\pa_y\om^{\mathrm{in}}_{k}\|_{L^2}),
\end{align*}
and
\begin{align*}
\sum_{k\in\Z} G_k\lesssim \nu^{-\frac12}\sum_{k\in\Z}\sum_{l\in \Z}E_{k-l}  G_{l} +\sum_{k\in\Z}|k|^{\frac13}\|\theta^{\mathrm{in}}_{k}\|_{L^2}.
\end{align*}
Thanks to the initial data condition
\begin{align*}
\|u^{\mathrm{in}}\|_{H^2}\leq c\nu^{\frac12}, \qquad \|\theta^{\mathrm{in}}\|_{L^2}+\||D_{x}|\theta^{\mathrm{in}}\|_{L^2}\leq c\nu^{\frac56},
\end{align*}
we finally derive
\begin{align*}
\sum_{k\in\Z}E_k\lesssim \nu^{\frac12}, \qquad \sum_{k\in \Z}G_{k}\lesssim \nu^{\frac56},
\end{align*}
which completes the proof of Theorem \ref{Th: tran thre}.
\end{proof}
\appendix
\section{Auxiliary estimates}
\begin{lemma}\label{lemma: sinh}
There holds
\begin{align}
&\bbn{\frac{\sinh{k(1-y)}}{\sinh k}}_{L^2}\lesssim |k|^{-\frac12}, \qquad \bbn{\frac{\sinh{(ky)}}{\sinh k}}_{L^2}\lesssim |k|^{-\frac12}, \label{esti:sinh}\\
&\bbn{\frac{\cosh{k(1-y)}}{\sinh k}}_{L^2}\lesssim |k|^{-\frac12}, \qquad \bbn{\frac{\cosh{(ky)}}{\sinh k}}_{L^2}\lesssim |k|^{-\frac12}. \label{esti:cosh}
\end{align}
\end{lemma}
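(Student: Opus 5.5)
By the symmetry $y\mapsto 1-y$, the first estimate in each line of \eqref{esti:sinh} and \eqref{esti:cosh} follows from the second. Also, every quantity is even in $k$, so we may assume $k\ge 1$ (for $k\in\Z\setminus\{0\}$, as everywhere in the paper). It therefore suffices to bound $\|\sinh(ky)/\sinh k\|_{L^2}$ and $\|\cosh(ky)/\sinh k\|_{L^2}$ by $Ck^{-1/2}$.

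The key pointwise observation is the following. For $k\ge1$ and $y\in[0,1]$ we have $0\le\sinh(ky)\le\cosh(ky)\le e^{ky}$. We also have $\sinh k=\tfrac12 e^{k}(1-e^{-2k})\ge \tfrac12(1-e^{-2})e^{k}$. Hence
\begin{align*}
0\le \frac{\sinh(ky)}{\sinh k}\le\frac{\cosh(ky)}{\sinh k}\le C e^{-k(1-y)},\qquad y\in[0,1],
\end{align*}
with $C=2/(1-e^{-2})$ independent of $k$.

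Squaring and integrating gives
\begin{align*}
\int_0^1 e^{-2k(1-y)}\,dy=\frac{1-e^{-2k}}{2k}\le \frac{1}{2k}.
\end{align*}
This yields both bounds with the $|k|^{-1/2}$ rate, which completes the argument.

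The only point needing care, and the one I would flag as the potential obstacle, is uniformity for small $|k|$. The lower bound on $\sinh k$ degenerates as $k\to0$, and indeed $\cosh(ky)/\sinh k\sim 1/k$ blows up there. So the cosh estimate genuinely requires $|k|\ge1$, which holds since $k$ is a nonzero integer. If one wanted real $k$, one would restrict to $|k|\ge1$ or treat $|k|\le1$ separately, where the sinh bounds remain $O(1)$ via $\sinh(ky)\le\sinh k$.
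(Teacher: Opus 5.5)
Your proof is correct and complete. The paper states this lemma in the appendix without proof, so there is no argument of the authors' to compare against. Your two steps fill the gap in the natural way:
\begin{itemize}
\item the pointwise bound $0\le \sinh(ky)/\sinh k\le \cosh(ky)/\sinh k\le \frac{2}{1-e^{-2}}\,e^{-k(1-y)}$ for $k\ge 1$;
\item the integral $\int_0^1 e^{-2k(1-y)}\,dy\le (2k)^{-1}$.
\end{itemize}
The reduction via $y\mapsto 1-y$ is also fine.

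One small correction: $\cosh(ky)/\sinh k$ is odd in $k$, not even. Only its $L^2$ norm enters, and that norm is even, so your reduction to $k\ge 1$ still stands.

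Your caveat about small $|k|$ is accurate and worth keeping. The cosh bounds genuinely fail as $k\to 0$, since the norm behaves like $|k|^{-1}$ there. So the lemma implicitly requires $|k|\ge 1$, which holds where it is used, namely $k\in\Z\setminus\{0\}$ (e.g.\ to get $|c_1|,|c_2|\lesssim |k|^{-1/2}\|w_{Na}\|_{L^2}$ by Cauchy--Schwarz). The sinh bounds, by contrast, hold for all real $k\neq 0$, since $0\le \sinh(ky)/\sinh k\le 1\le |k|^{-1/2}$ when $0<|k|\le 1$.
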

\begin{lemma}
Let $(\pa^2_y- k^2)\psi=\om$, $\psi(\pm1)=0$. Then, we have
\begin{align}\label{esti: weightpayu}
\|\sqrt{1-(2y-1)^2} \pa_y u\|^2_{L^2}\leq \|\sqrt{1-(2y-1)^2}\om\|^2_{L^2}.
\end{align}
\end{lemma}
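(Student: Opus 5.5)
The statement to prove is the weighted inequality \eqref{esti: weightpayu}. Here $u=(\pa_y\psi,-ik\psi)$, so $\pa_y u=(\pa_y^2\psi,-ik\pa_y\psi)$. The boundary data of $\psi$ are read at $y=0,1$, matching the weight $w(y):=1-(2y-1)^2=4y(1-y)$, which vanishes exactly there.

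The plan is to expand the right-hand side $\int_0^1 w\,|\pa_y^2\psi-k^2\psi|^2dy$ and compare it termwise with
\[
\int_0^1 w\bigl(|\pa_y^2\psi|^2+k^2|\pa_y\psi|^2\bigr)dy .
\]
The expansion gives
\[
\int w|\om|^2=\int w|\pa_y^2\psi|^2-2k^2\operatorname{Re}\int w\,\pa_y^2\psi\,\bar\psi+k^4\int w|\psi|^2 .
\]

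The cross term is handled by integrating by parts once. Because $\psi$ vanishes at both endpoints (and $w$ vanishes there too), no boundary terms appear:
\[
-\operatorname{Re}\int w\,\pa_y^2\psi\,\bar\psi=\int w|\pa_y\psi|^2+\operatorname{Re}\int w'\,\pa_y\psi\,\bar\psi .
\]
Next, use $\operatorname{Re}(\pa_y\psi\,\bar\psi)=\tfrac12\pa_y|\psi|^2$ and integrate by parts again:
\[
\operatorname{Re}\int w'\pa_y\psi\bar\psi=-\tfrac12\int w''|\psi|^2=4\int|\psi|^2 .
\]
This uses $w''=-8$, and the boundary term vanishes again because $\psi=0$ at the endpoints. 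Collecting everything yields the identity
\[
\int w|\om|^2=\int w|\pa_y^2\psi|^2+2k^2\int w|\pa_y\psi|^2+8k^2\int|\psi|^2+k^4\int w|\psi|^2 .
\]

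Every term on the right is nonnegative, and the first two already dominate $\|\sqrt{w}\,\pa_y u\|_{L^2}^2=\int w(|\pa_y^2\psi|^2+k^2|\pa_y\psi|^2)$. This gives \eqref{esti: weightpayu}. The only point needing care, and the step I would check most closely, is the sign of the $w''$ contribution. Here concavity of $w$ makes it favorable; this is precisely why the weight $w$ is chosen. For a general weight this term could have the wrong sign.

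I would also confirm that only $\psi=0$ at the boundary is required, not $\pa_y\psi=0$. This holds because each boundary term carries either a factor of $w$ or a factor of $\psi$. It is consistent with the lemma assuming only the Dirichlet condition on $\psi$.
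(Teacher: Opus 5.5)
Your argument is correct. The identity
\[
\int_0^1 w|\om|^2\,dy=\int_0^1 w|\pa_y^2\psi|^2\,dy+2k^2\int_0^1 w|\pa_y\psi|^2\,dy+8k^2\int_0^1|\psi|^2\,dy+k^4\int_0^1 w|\psi|^2\,dy,\qquad w=4y(1-y),
\]
checks out term by term. Reading the hypothesis $\psi(\pm1)=0$ as $\psi(0)=\psi(1)=0$ is the right interpretation; it is a leftover from the channel $[-1,1]$, where the weight is $1-y^2$.

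The paper gives no proof of this lemma. It is simply stated in the appendix, without even the citation attached to the neighbouring heat-equation lemma, so there is no competing route to compare with. Your two integrations by parts supply the missing argument, with constant $1$ and the nonnegative terms $8k^2\|\psi\|_{L^2}^2+k^4\|\sqrt{w}\,\psi\|_{L^2}^2$ to spare. They use only the Dirichlet condition, which is available when the lemma is applied to $u_{k-l}$ in \eqref{esti: f2}. For $k-l=0$ the lemma is trivial, since $\pa_y u^{(1)}_0=\om_0$.

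One small correction to your commentary. Both boundary terms vanish already because $\psi(0)=\psi(1)=0$. So neither the vanishing of $w$ at the endpoints nor any special choice of weight is what makes the lemma work; with $w\equiv1$ the same computation gives the standard unweighted identity. Concavity only allows the estimate to survive the weight. The weight $1-(2y-1)^2$ itself is chosen for the nonlinear step, where it removes the boundary-layer part of $\om$ from the $L^\infty L^2$ norm and pairs with the Hardy bound on $u^{(2)}/\sqrt{1-(2y-1)^2}$.
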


\begin{lemma}\label{lemma:heat,dirichlet,H2}
Let $U$ be the solution to the equation \eqref{equ: U} with the initial datum $U^{\mathrm{in}}$ satisfying the condition \eqref{equ:condition,shear,flow,M}. Then the following estimate holds
\begin{align}
\|U(t,y)-U(s,y)\|_{ L^\infty_{y}}\lesssim & \nu(t-s)\|U^{\mathrm{in}}\|_{H^4}.\label{esti: Ut-Us Linfty}
\end{align}
\end{lemma}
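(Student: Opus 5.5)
We plan to prove the estimate by writing $U(t)-U(s)$ through the heat equation \eqref{equ: U} and then bounding $\pa_y^2 U$ uniformly in time. Since $\pa_t U=\nu\pa_y^2U$, for $s\le t$ we have
\begin{align*}
U(t,y)-U(s,y)=\nu\int_s^t \pa_y^2U(\tau,y)\,d\tau ,
\end{align*}
and therefore $\|U(t)-U(s)\|_{L^\infty_y}\le \nu|t-s|\sup_{\tau\ge0}\|\pa_y^2U(\tau)\|_{L^\infty_y}$. What remains is the uniform-in-time bound $\sup_\tau\|\pa_y^2U(\tau)\|_{L^\infty}\lesssim\|U^{\mathrm{in}}\|_{H^4}$.

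To get it, we set $W=\pa_y^2U$. The boundary data $U(t,0)=U^{\mathrm{in}}(0)$ and $U(t,1)=U^{\mathrm{in}}(1)$ do not depend on $t$, so $\pa_tU=0$ at $y=0,1$. Hence $W=\nu^{-1}\pa_tU$ vanishes on the boundary for $t>0$. Differentiating the equation twice in $y$ shows that $W$ solves the heat equation $\pa_tW=\nu\pa_y^2W$ with homogeneous Dirichlet data and initial datum $W(0)=\pa_y^2U^{\mathrm{in}}$. By the compatibility assumption $\pa_y^2U^{\mathrm{in}}(0)=\pa_y^2U^{\mathrm{in}}(1)=0$ in \eqref{equ:condition,shear,flow,M}, this datum is compatible with the boundary condition.

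There are two ways to bound $W$ uniformly in time. The maximum principle gives $\|W(\tau)\|_{L^\infty}\le\|\pa_y^2U^{\mathrm{in}}\|_{L^\infty}\lesssim\|U^{\mathrm{in}}\|_{H^4}$ by Sobolev embedding. Alternatively, energy estimates give $\|W\|_{L^2}$ and $\|\pa_yW\|_{L^2}$ nonincreasing: test with $W$, and with $-\pa_y^2W$, where the boundary terms vanish because $\pa_tW=0$ at $y=0,1$. Then $H^1\subset L^\infty$ again gives $\|W(\tau)\|_{L^\infty}\lesssim\|U^{\mathrm{in}}\|_{H^3}$. Either route closes the estimate with constant independent of $\nu$.

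The main obstacle is justifying the boundary behaviour of $W$ rigorously, namely the compatibility at $t=0$ and enough regularity to differentiate twice and apply the maximum principle. We would handle this by approximating $U^{\mathrm{in}}$ by smooth data satisfying the same compatibility conditions, using the $H^4$ assumption, and passing to the limit in the inequality.
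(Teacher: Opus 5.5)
Your proof is correct. Note that the paper contains no argument for this lemma and simply refers to \cite{Li-Shen-Zhang}, so your proposal supplies a self-contained proof rather than reproducing one given here.

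Your route is the natural one. You write $U(t)-U(s)=\nu\int_s^t\pa_y^2U\,d\tau$, and then observe that $W=\pa_y^2U=\nu^{-1}\pa_tU$ solves the Dirichlet heat equation because the boundary values of $U$ are time-independent.

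Your version gives the sharper bound $\|U(t)-U(s)\|_{L^\infty}\le\nu|t-s|\,\|\pa_y^2U^{\mathrm{in}}\|_{L^\infty}$. So $H^3$ (indeed $W^{2,\infty}$) regularity suffices, and the $H^4$ hypothesis in the statement is more than is needed.

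The regularity issue you flag as the main obstacle is milder than you suggest. Write $U=\ell+\tilde U$ with $\ell(y)=U^{\mathrm{in}}(0)+(U^{\mathrm{in}}(1)-U^{\mathrm{in}}(0))y$. Then $\tilde U(t)=e^{\nu t\Delta_D}\tilde U(0)$ with $\tilde U(0)\in H^2\cap H^1_0=D(\Delta_D)$. Since $\Delta_D$ commutes with its semigroup on its domain, $\pa_y^2U(t)=e^{\nu t\Delta_D}\pa_y^2U^{\mathrm{in}}$ follows directly, for instance from the sine series. The $L^\infty$-contractivity of the Dirichlet heat semigroup then finishes the proof with no approximation argument.

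In this maximum-principle route the compatibility condition $\pa_y^2U^{\mathrm{in}}(0)=\pa_y^2U^{\mathrm{in}}(1)=0$ is not even needed. It is needed only for your energy variant: there $\|\pa_yW(t)\|_{L^2}$ stays bounded as $t\to0$ precisely when $W(0)\in H^1_0$.
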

The proof of Lemma \ref{lemma:heat,dirichlet,H2} can be found in \cite{Li-Shen-Zhang}.

\medskip
\noindent
\textbf{Acknowledgements.}  Q. Chen was partially supported by National Natural Science Foundation of China (Grant No: 12471149). Z. Li was partially supported by the Postdoctoral Fellowship Program of CPSF (Grant No: GZC20240123 and 2025M773073).

\noindent\textbf{Data Availability Statement:}
Data sharing is not applicable to this article as no datasets were generated or analysed during the current study.

\noindent\textbf{Conflict of Interest:}
The authors declare that they have no conflict of interest.

\end{document}